\numberwithin{equation}{section}
\newtheorem{theorem}{Theorem}[section]
\newtheorem{corollary}{Corollary}[section]
\theoremstyle{remark}
\newtheorem{remark}{Remark}[section]
\DeclareMathOperator{\supp}{supp}
\DeclareMathOperator{\card}{card}
\begin{document}
	
\title{Limit theorems\\in the extended coupon collector's problem}
	
\author{Andrii Ilienko\\
	{\small\it Igor Sikorsky Kyiv Polytechnic Institute}\\
	{\small\it ilienko@matan.kpi.ua}}
	
\date{\today}
	
\maketitle
	
\begin{abstract}
	We consider an extended variant of the classical coupon collector's problem with infinite number of collections. An arriving coupon is placed in the $r^{th}$ collection, $r\ge0$, if $r$ is the smallest index such that the corresponding collection still does not have a coupon of this type. We derive distributional limit theorems for the number of empty spots in different collections at the time when the $0^{th}$ collection was completed, as well as after some delay. We also obtain limiting distributions for completion times of different collections. All main results are given in an ultimate infinite-dimensional form in the sense of distributional convergence in $\mathbb R^\infty$. The main tool in the proofs is convergence of specially constructed point processes.
\end{abstract}

\section{Introduction}

The coupon collector’s problem is undoubtedly one of the most popular classical problems in combinatorial probability. It is not only of purely theoretical interest, but also finds numerous applications (see, e.g., \cite{BH}, \cite{Luko}, or \cite{Omni}). Relatively recently, an extended version of this problem has attracted considerable attention (see \cite{Foa01}, \cite{Foa03}, \cite{AOR}, and \cite{DP18}).

Following \cite{Foa03}, we give its statement as follows. A person collects coupons, each of which belongs to one of $n\in\mathbb N$ different types. The coupons arrive one by one at discrete times, the type of each coupon being equiprobable and independent of types of preceding ones. Each time the person receives a coupon which he does not yet have, he puts it in his album. Otherwise, he gives it to his younger brother. In his turn, the latter puts it in his own album, if the coupon is new for him. Otherwise, he gives it to the next younger brother, and so on. All the brothers try to complete their own collections, using the same policy. The main collector is labelled 0, the younger brother 1, the next younger brother 2, etc.

For $r\in\mathbb N_0=\mathbb N\cup\{0\}$, let $T_r^{(n)}$ stand for the time the $r^{th}$ person completes his collection. By a classical result due to Erd\H{o}s and R\'enyi \cite{ER},
\begin{gather}
\mathbb ET_r^{(n)}=n\ln n+rn\ln\ln n+
(\gamma-\ln r!)n+{\scriptstyle\mathcal{O}}(n),\nonumber\\
\lim_{n\to\infty}\mathbb P\Bigl\{\frac{T_r^{(n)}}n-\ln n-r\ln\ln n<x\Bigr\}=\exp\Bigl(-\frac{\mathrm e^{-x}}{r!}\Bigr),\quad x\in\mathbb R,\label{ER}
\end{gather}
with $\gamma=-\Gamma'(1)$ standing for the Euler–Mascheroni constant. So, the limiting distribution is of Gumbel type.

Denote by $U_r^{(n)}$, $r\in\mathbb N$, the number of empty spots in the album of the $r^{th}$ brother at time $T_0^{(n)}$, that is, when the main collector completed his album. In \cite{Pint}, by using the optional stopping theorem for a specially constructed martingale, it was shown that
$\mathbb EU_1^{(n)}=\mathcal H_1^{(n)}$, where
$\mathcal H_1^{(n)}=\sum_{k=1}^n\frac 1k$ stands for the $n^{th}$ harmonic number. Subsequently, it turned out that this can be further generalized to
\begin{equation}
\label{EU}
\mathbb EU_r^{(n)}=\mathcal H_r^{(n)},\qquad n,r\in\mathbb N,
\end{equation}
where $\mathcal H_r^{(n)}$ are hyperharmonic numbers defined by the recursive formula
\[\mathcal H_r^{(n)}=\sum_{k=1}^n\frac {\mathcal H_{r-1}^{(k)}}k,\qquad r\ge2.\]
This was proved in \cite{Foa01} and \cite{Foa03} by using a non-elementary generating function argument, and in \cite{AOR} by a simpler and more direct probabilistic reasoning. Moreover, the latter argument made it possible to obtain another, more explicit representation:
\[\mathcal H_r^{(n)}=\sum_{k=1}^n\binom nk\frac{(-1)^{k+1}}{k^r},\qquad r\ge1.\]
By (10.1) in \cite{Foa01}, for the above hyperharmonic numbers the following asymptotic formula holds:
\begin{equation}
\label{hn_as}
\frac{r!\,\mathcal H_r^{(n)}}{\ln^rn}\to1\quad\text{as $n\to\infty$.}
\end{equation}
It should be noted that the term \lq\lq hyperharmonic numbers\rq\rq\ often refers to a different numerical sequence (see, e.g., the corresponding article in Wikipedia).

Note that this problem allows for different, though equivalent, formulations. For instance, we give another one in terms of a balls-into-bins model. Suppose we have an infinite sequence of balls and $n$ bins of unlimited capacity. Each time a single ball is placed into one of the bins, which is chosen equiprobably and independently of the previous history. In this setting, $T_r^{(n)}$ means the first time when each bin contains at least $r+1$ balls. Similarly, $U_r^{(n)}$ stands for the number of bins containing at most $r$ balls at time $T_0^{(n)}$, that is, when each bin first contains at least one ball.

Throughout all this time, the question about limiting distributions for $U_r^{(n)}$ as $n\to\infty$ remained open. This problem was explicitly stated on p.~446 in \cite{DP18}. In order to determine these limiting distributions it would be natural to use the generating function of $U_r^{(n)}$. The latter can be easily deduced from Proposition 4.1 in \cite{Foa01}:
\[\mathbb Eu^{U_r^{(n)}}=nu\sum_{\substack{a+b+c_1+\\+\ldots+c_r=n-1}}
\binom{n-1}{a,b,c_1,\ldots,c_r}(-1)^b(u-1)^{\sum_{k=1}^rc_k}
\frac{(\sum_{k=1}^rkc_k)!}{\prod_{k=1}^r(k!)^{c_k}}(n-a)^{-1-\sum_{k=1}^rkc_k}.\]
It is, however, clear that the generating function of such an ominous form can hardly be used for this purpose. Therefore, as in our previous paper \cite{Il19}, we take a different approach based on the use of point processes. This allows not only to determine the limiting distributions for $U_r^{(n)}$ (looking ahead, they will turn out to be exponential) but also to obtain the limit theorem in an ultimate infinite-dimensional form. In other words, we will prove the distributional convergence of the sequence $\bigl(U_r^{(n)}, r\in\mathbb N\bigr)$, normalized in a proper way, to a random element $(E,E,\ldots)$ of $\mathbb R^{\infty}$ as $n\to\infty$, where $E$ is (all the time the same!) $\mathsf {Exp}(1)$-distributed random variable. This means that $U_r^{(n)}$ exhibit an incredible asymptotic stability. The precise statement is given in Theorem \ref{main_th} below.

Note that in the proof of this limit theorem we use a rather unusual trick which, hopefully, can be applied elsewhere. Namely, appealing to a certain analogy between normalization and thinning, we prove the distributional convergence for thinned random variables rather than for normalized ones. This convergence can be deduced from that of specially constructed point processes. The way we then return to normalized random variables is described in Section \ref{th_tr}.

We also prove another limit theorem which can be called delayed one. The idea behind is as follows. Due to their definition, the random variables $U_r^{(n)}$ increase unboundedly as $n\to\infty$. Thus, in order to obtain a distributional limit, we had to use some normalization. If one still wants to obtain a limit result without any normalization, one should, instead of $U_r^{(n)}$, consider the number of empty spots in the album of the $r^{th}$ brother not at time $T_0^{(n)}$ but after some delay, that is, at time $T_0^{(n)}+d_r^{(n)}$ with some non-random $d_r^{(n)}\to\infty$ as $n\to\infty$. With the right choice of $d_r^{(n)}$, the corresponding numbers of empty spots will not go to infinity as $n$ does, but will instead converge in distribution to some limit law. In Theorem \ref{second_th}, we find this right form of $d_r^{(n)}$ and establish convergence to a geometric distribution. Like the previous result, we state and prove this one in an infinite-dimensional form, that is, in the sense of distributional convergence in $\mathbb R^\infty$.

Our third main result, Theorem \ref{third_th} below, describes the asymptotic behaviour of the sequence of completion times, that is, of the (centered and normalized in a proper way) random elements $\bigl(T_r^{(n)}, r\in\mathbb N_0\bigr)$ in $\mathbb R^\infty$. Actually, this is an infinite-dimensional extension of \eqref{ER}, the limit theorem due to Erd\H{o}s and R\'enyi. Surprisingly, the limiting random element turns out to consist of independent entries, which means that the completion times $T_r^{(n)}$, after appropriate centering and normalizing, become asymptotically independent. This may seem all the more unexpected since $T_r^{(n)}$ increase in $r$, which excludes any kind of independence for non-centered completion times. So, this asymptotic independence is achieved only at the expense of centering and normalizing.
As a direct application of the latter result, we derive a limit theorem for inter-completion times with limiting logistic distribution.

\section{Preliminaries and main results}
\label{prel_sec}

Denote by $Y_{i,r}^{(n)}$, $n\in\mathbb N$, $i\le n$, $r\in\mathbb N_0$, the arrival time of the $(r+1)^{th}$ coupon of type $i$. So, at time $Y_{i,r}^{(n)}$ the collector labelled $r$ receives such a coupon into his collection. Hence,
\[\mathbb P\bigl\{Y_{i,r}^{(n)}=k\bigr\}=
\binom{k-1}{r}\Bigl(\frac 1n\Bigr)^{r+1}\Bigl(1-\frac 1n\Bigr)^{k-r-1},\quad k\ge r+1,\]
which is one of versions of the negative binomial distribution, namely, the one that counts trials up to the $(r+1)^{th}$ success.
Note that
\begin{gather}
\label{T}
T_r^{(n)}=\max_{i\le n}Y_{i,r}^{(n)},\\
\label{UY}
U_r^{(n)}=\card\bigl\{i\colon Y_{i,r}^{(n)}>
\max_{i\le n}Y_{i,0}^{(n)}\bigr\}.
\end{gather}

For different $i$, the random variables $Y_{i,r}^{(n)}$ are identically distributed but not independent. There is a standard way to get rid of this dependence --- a poissonization trick due to Holst (see \cite{Hol86} or \cite{Il19} for details). Namely, consider a poissonized scheme, that is, assume that coupons arrive at random times with independent $\mathsf{Exp}(1)$-distributed intervals $E_j$, $j\in\mathbb N$. Similarly to the above, denote by $Z_{i,r}^{(n)}$ the arrival time of the $(r+1)^{th}$ coupon of type $i$ in the poissonized scheme. Then, for different $i$, $Z_{i,r}^{(n)}$ are independent gamma-distributed random variables, $Z_{i,r}^{(n)}\sim\mathsf{\Gamma}\bigl(r+1,\frac 1n\bigr)$, which is a consequence of the thinning theorem for Poisson point processes (see, e.g., Theorem 5.8 in \cite{LP}). For any fixed $n$, the sequences $\bigl(Y_{i,r}^{(n)}\bigr)$ and $\bigl(Z_{i,r}^{(n)}\bigr)$ can be given on a common probability space and coupled by
\begin{equation}
\label{ZY}
Z_{i,r}^{(n)}=\sum_{j=1}^{Y_{i,r}^{(n)}}E_j,\quad i\le n,\,r\in\mathbb N_0.
\end{equation}
Moreover, $\bigl(Y_{i,r}^{(n)},i\le n, r\in\mathbb N_0\bigr)$ is independent of $(E_j, j\in\mathbb N)$, because the inter-arrival times $E_j$ do not affect the order in which the coupons of different types arrive. Finally, note that, along with \eqref{UY}, a similar formula in terms of $Z_{i,r}^{(n)}$ holds:
\begin{equation}
\label{UZ}
U_r^{(n)}=\card\bigl\{i\colon Z_{i,r}^{(n)}>
\max_{i\le n}Z_{i,0}^{(n)}\bigr\}.
\end{equation}

We now state our first main result, an infinite-dimensional distributional limit theorem for $\bigl(U_r^{(n)},r\in\mathbb N\bigr)$ in the space $\mathbb R^{\infty}$.

\begin{theorem}
\label{main_th}
Let $E\sim\mathsf{Exp}(1)$. Then
\begin{equation}
\label{main_th_eq}
\biggl(\frac{r!\,U_r^{(n)}}{\ln^rn},r\in\mathbb N\biggr)\xrightarrow d(E,E,\ldots)\quad\text{ in }\mathbb R^{\infty}\text{ as }n\to\infty.
\end{equation}
\end{theorem}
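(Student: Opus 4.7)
We work in the poissonized scheme: by \eqref{UZ}, $U_r^{(n)}=\card\{i\le n\colon Z_{i,r}^{(n)}>M_0^{(n)}\}$, with $M_0^{(n)}:=\max_{i\le n}Z_{i,0}^{(n)}$ and the $Z_{i,r}^{(n)}\sim\mathsf{\Gamma}(r+1,\frac 1n)$ independent across $i$. Since convergence in distribution in $\mathbb R^\infty$ (product topology) is equivalent to convergence of all finite-dimensional projections, it suffices to prove, for every fixed $k\in\mathbb N$, the joint convergence $\bigl(r!\,U_r^{(n)}/\ln^r n\bigr)_{1\le r\le k}\xrightarrow d(E,\ldots,E)$ in $\mathbb R^k$, and to identify $E\sim\mathsf{Exp}(1)$ explicitly.

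The heuristic that identifies the limit is as follows. By classical extreme-value theory for exponential samples, $G^{(n)}:=M_0^{(n)}/n-\ln n\xrightarrow d G$ with $G$ Gumbel, so $E:=\mathrm e^{-G}\sim\mathsf{Exp}(1)$. For $r\ge1$, the gamma tail $\mathbb P(\mathsf{\Gamma}(r+1,1)>\ln n+g)\sim\ln^r n\cdot\mathrm e^{-g}/(n\,r!)$ yields, upon summing over $i$, $\mathbb E[U_r^{(n)}\mid G^{(n)}=g]\sim\ln^r n\cdot\mathrm e^{-g}/r!$. Substituting $g=G^{(n)}$ gives $r!\,U_r^{(n)}/\ln^r n\approx\mathrm e^{-G^{(n)}}$; crucially, the \emph{same} $G^{(n)}$ drives every coordinate, so the joint limit collapses onto the diagonal $(E,E,\ldots)$.

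To turn this heuristic into a proof while bypassing the unwieldy normalization by $\ln^r n$, I would use the normalization-thinning trick sketched in the introduction. For each $r\le k$, introduce auxiliary uniforms $\varepsilon_i^{(r)}\sim U(0,1)$, mutually independent and independent of everything else, and define the thinned counts $\tilde U_r^{(n)}=\sum_i\mathbf 1\{Z_{i,r}^{(n)}>M_0^{(n)},\,\varepsilon_i^{(r)}<r!/\ln^r n\}$. Computing the joint pgf of $(\tilde U_r^{(n)})_r$ by integrating out the $\varepsilon$'s and linearising $\log(1+x)$ shows that it is asymptotically equal to $\mathbb E\bigl[\exp\bigl(\sum_r(z_r-1)\cdot r!\,U_r^{(n)}/\ln^r n\bigr)\bigr]$, so joint convergence of $(r!\,U_r^{(n)}/\ln^r n)_r$ to $(E,\ldots,E)$ is equivalent, under the tightness that follows from \eqref{EU}-\eqref{hn_as}, to $(\tilde U_r^{(n)})_r\xrightarrow d(N_1,\ldots,N_k)$ with $N_r\mid E$ independent $\mathsf{Poisson}(E)$. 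The thinned problem in turn reduces to a point-process statement: show weak convergence of $\Pi^{(n)}:=\sum_i\delta_{V_i^{(n)}}$, with $V_i^{(n)}:=Z_{i,0}^{(n)}/n-\ln n$, to a Poisson point process on $\mathbb R$ of intensity $\mathrm e^{-v}\,dv$ (hence in particular $G^{(n)}\xrightarrow d G$), and deduce that conditional on $\mathcal F_0^{(n)}:=\sigma(Z_{i,0}^{(n)},i\le n)$ the $\tilde U_r^{(n)}$ are independent sums of Bernoullis whose total means all converge in probability to the common $\mathcal F_0^{(n)}$-measurable quantity $\mathrm e^{-G^{(n)}}$.

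The main obstacle is this last concentration step. Writing $\xi_i^{(r,n)}:=Z_{i,r}^{(n)}-Z_{i,0}^{(n)}\sim\mathsf{\Gamma}(r,\frac1n)$, independent of $\mathcal F_0^{(n)}$, one has $\mathbb E[\tilde U_r^{(n)}\mid\mathcal F_0^{(n)}]=(r!/\ln^r n)\sum_i\mathbb P(\xi_i^{(r,n)}/n>G^{(n)}-V_i^{(n)})$, and a direct calculation with the gamma tail reveals that the dominant contribution to this sum comes \emph{not} from the few top points of $\Pi^{(n)}$ but from indices whose $V_i^{(n)}$ sweep a logarithmic window below $G^{(n)}$. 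Handling this regime requires the full point-process convergence of $\Pi^{(n)}$ together with a separate bulk estimate for indices with $V_i^{(n)}\approx-\ln n$, plus a Chebyshev-type second-moment bound to go from mean convergence to convergence in probability. Once the conditional Poisson limit is in place, de-conditioning via dominated convergence and undoing the thinning via the pgf equivalence above are routine.
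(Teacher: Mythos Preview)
Your high-level strategy matches the paper's: reduce to finite-dimensional convergence, replace normalization by thinning, establish a Poisson limit for the thinned counts, and then invert via the pgf identity (the paper formalizes this inversion as a stand-alone result, Theorem~\ref{trick_th}, with exactly your linearization argument and the moment bound \eqref{EU}--\eqref{hn_as}). Where you diverge from the paper is in how you obtain the joint Poisson limit for $\bigl(\ln^{-r}n\odot U_r^{(n)}\bigr)_r$.

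The paper does \emph{not} condition on $\mathcal F_0^{(n)}$ at all. Instead it glues the thinned level-$r$ point processes $T_{\ln^{-r}n}\eta_r^{(n)}$ into a single point process $H^{(n)}$ on $\mathbb N_0\times(\mathbb R\cup\{+\infty\})$ and proves $H^{(n)}\xrightarrow{vd}H$ Poisson via Kallenberg's void-probability/intensity criterion. The entire computation is unconditional: one only needs $n\ln^{-r}n\cdot P_r^{(n)}\to\frac1{r!}\int_{B_r}e^{-x}\,dx$ for sets $B_r$ bounded below, and that the two-level terms $n\ln^{-r_1-r_2}n\cdot P_{r_1,r_2}^{(n)}$ vanish. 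Asymptotic independence across $r$ then falls out of the Poisson structure of the limit, and the functionals $\tilde T_0^{(n)}$, $V_r^{(n)}=H^{(n)}(\{r\}\times(\tilde T_0^{(n)},+\infty))$ converge by Skorokhod coupling and continuity on $M_p$. In particular the ``bulk'' you worry about never enters, because the void-probability computation is automatically localized to the bounded-from-below test sets.

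Your route via conditioning on $\mathcal F_0^{(n)}$ can be made to work, but it is heavier. Two points to watch. First, ``the $\tilde U_r^{(n)}$ are independent sums of Bernoullis'' is correct only in the sense that each one is a sum of conditionally independent Bernoullis across $i$; for fixed $i$ the indicators across $r$ are dependent through $\xi_i^{(\cdot,n)}$, so the \emph{joint} conditional Poisson limit requires you to check that $\sum_i\mathbb P(B_{i,r_1}=B_{i,r_2}=1\mid\mathcal F_0^{(n)})\to0$, which is easy but should be said. Second, the bulk estimate you flag is genuine: for $r=1$ it amounts to $(1/\ln n)\sum_i e^{V_i^{(n)}}\to1$, which needs a truncation plus second-moment argument (the untruncated mean is infinite), and for $r\ge2$ the analogous sums with powers of $G^{(n)}-V_i^{(n)}$ require more care. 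The paper's multilevel point-process route sidesteps this entirely, at the price of setting up the space $\mathbb X$ with the metric~\eqref{d}; your route is more elementary in its toolkit but trades that for a sharper hands-on estimate.
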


Let us now turn to the delayed limit theorem. By analogy with $U_r^{(n)}$, for fixed $d_r^{(n)}\in\mathbb N$ denote by $\hat U_r^{(n)}$ the number of empty spots in the album of the $r^{th}$ brother at time $T_0^{(n)}+d_r^{(n)}$:
\begin{equation}
\label{hU}
\hat U_r^{(n)}=\card\bigl\{i\colon Y_{i,r}^{(n)}>
\max_{i\le n}Y_{i,0}^{(n)}+d_r^{(n)}\bigr\}.
\end{equation}
The following result asserts that, with the right choice of $d_r^{(n)}$, the random elements $\bigl(\hat U_r^{(n)},r\in\mathbb N\bigr)$ of $\mathbb R^\infty$ converge in distribution to some limiting random element with geometrically distributed marginals.

\begin{theorem}
\label{second_th}
Let
\begin{equation}
\label{d_r}
d_r^{(n)}=rn\ln\ln n+{\scriptstyle\mathcal{O}}(n),\quad r\in\mathbb N,
\end{equation}
where ${\scriptstyle\mathcal{O}}(n)$, of course, may differ for different $r$.
Let also $(G_r,r\in\mathbb N)$ be a random element of $\mathbb R^\infty$ with the probability generating function of the following form:
\begin{equation}
\label{Gen_G}
\mathbb E\Bigl(\prod_{r=1}^\infty u_r^{G_r}\Bigr)=
\Bigl(\mathrm e-\sum_{r=1}^\infty\frac{u_r}{r!}\Bigr)^{-1},\quad (u_r,r\in\mathbb N)\in[0,1]^\infty.
\end{equation}
Equivalently, $(G_r,r\in\mathbb N)$ may be defined as the random sequence
$\bigl(N_r(E/r!),r\in\mathbb N\bigr)$, where $E\sim\mathsf{Exp}(1)$ and $N_r$ stand for unit-rate Poisson counting processes, independent of each other and of $E$.

Then
\begin{equation}
\label{second_th_eq}
\bigl(\hat U_r^{(n)},r\in\mathbb N\bigr)\xrightarrow
d(G_r,r\in\mathbb N)\quad\text{ in }\mathbb R^{\infty}\text{ as }n\to\infty.
\end{equation}
\end{theorem}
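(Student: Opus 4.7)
The plan is to imitate the poissonization-and-point-process strategy of Theorem \ref{main_th}, but to track the delayed threshold $T_0^{(n)}+d_r^{(n)}$. First, by \eqref{ZY}, $M_n:=\max_{i\le n}Z_{i,0}^{(n)}=\sum_{j=1}^{T_0^{(n)}}E_j$ and $\sum_{j=T_0^{(n)}+1}^{T_0^{(n)}+d_r^{(n)}}E_j=d_r^{(n)}+O_{\mathbb P}\bigl(\sqrt{d_r^{(n)}}\,\bigr)=d_r^{(n)}+{\scriptstyle\mathcal{O}}_{\mathbb P}(n)$. Absorbing this fluctuation into the $o(n)$ of \eqref{d_r}, it suffices to prove \eqref{second_th_eq} for
\begin{equation*}
\tilde U_r^{(n)}:=\card\bigl\{i\le n:Z_{i,r}^{(n)}>M_n+d_r^{(n)}\bigr\}.
\end{equation*}

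Next I rescale to catch the empty spots. Since $Z_{i,r}^{(n)}/n\sim\mathsf{\Gamma}(r+1,1)$, a short calculation with its density yields
\begin{equation*}
n\,\mathbb P\bigl\{Z_{i,r}^{(n)}/n-\ln n-r\ln\ln n\in du\bigr\}\to\frac{\mathrm e^{-u}}{r!}\,du
\end{equation*}
uniformly on every $[c,\infty)$, so the rescaled point process $\pi_r^{(n)}:=\sum_{i=1}^n\delta_{Z_{i,r}^{(n)}/n-\ln n-r\ln\ln n}$ converges, on each such half-line, to a Poisson point process of intensity $\mathrm e^{-u}/r!$. The shift $M_n/n-\ln n$ is the classical Gumbel limit; setting $E:=\lim_n\mathrm e^{-(M_n/n-\ln n)}\sim\mathsf{Exp}(1)$, the marginal $\tilde U_r^{(n)}=\pi_r^{(n)}\bigl((M_n/n-\ln n,\infty)\bigr)$ converges, conditionally on $E$, to a $\mathsf{Poisson}(E/r!)$ variable.

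For the joint convergence I use the asymptotic disjointness of the sets $A_r^{(n)}:=\{i:Z_{i,r}^{(n)}>M_n+d_r^{(n)}\}$. For $r_1<r_2$, given $i\in A_{r_1}^{(n)}$, the number of jumps of the rate-$1/n$ process $N^{(i)}$ in the window $(M_n+d_{r_1}^{(n)},M_n+d_{r_2}^{(n)}]$ is asymptotically Poisson with mean $(r_2-r_1)\ln\ln n\to\infty$, so the conditional probability it stays $\le r_2-r_1$ is $o(1)$; combined with $\mathbb E|A_{r_1}^{(n)}|=O(1)$ this gives $\mathbb P(A_{r_1}^{(n)}\cap A_{r_2}^{(n)}\ne\varnothing)\to 0$. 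Given this disjointness and the across-$i$ independence in the poissonized scheme, the vector $(\tilde U_r^{(n)})_{r\ge 1}$ becomes, conditionally on $M_n/n-\ln n$, a vector of asymptotically independent Poisson variables with parameters $\mathrm e^{-(M_n/n-\ln n)}/r!\to E/r!$, i.e.\ exactly $(N_r(E/r!))_{r\ge 1}$. Convergence in $\mathbb R^\infty$ then follows from finite-dimensional convergence, and \eqref{Gen_G} is verified directly by integrating $\exp\bigl(E\sum_{r}(u_r-1)/r!\bigr)$ against the $\mathsf{Exp}(1)$ density.

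The main obstacle is the rigorous joint convergence, because $M_n$ is not independent of the individual $N^{(i)}$: the bin $i^*$ attaining the maximum participates in every count. As in the proof of Theorem \ref{main_th}, this is best handled by working with the single multivariate point process $\sum_{i\le n}\delta_{(Z_{i,r}^{(n)}/n-\ln n-r\ln\ln n)_{r\in\mathbb N_0}}$ on $\mathbb R^\infty$, decoupling via conditioning on $\max_{i\ne i^*}Z_{i,0}^{(n)}$ (which is independent of the argmax), and invoking a standard Poisson convergence theorem for point processes once the intensity limits above are in place.
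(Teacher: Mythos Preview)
Your overall strategy---$r$-dependent centering $x/n-\ln n-r\ln\ln n$, convergence of the associated point processes to a non-homogeneous Poisson process, and reading off the counts as functionals of the limit---is the paper's, and your final paragraph essentially rediscovers the paper's key object: a multilevel point process on $\mathbb N_0\times(\mathbb R\cup\{+\infty\})$ whose $r$th layer carries the rescaled $(r{+}1)$th arrivals. Once convergence of this multilevel process to the Poisson process $H$ of \eqref{lambda} is established, the joint limit across $r$ is immediate from the independence of the layers of $H$ (Remark~\ref{ind_PPP}); your disjointness-of-$A_r^{(n)}$ detour is then unnecessary, and in its current form it is also incomplete, since conditioning on $M_n$ destroys the across-$i$ independence you invoke in the next sentence.

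The more serious gap is your opening reduction. You argue that because $\sum_{j=T_0^{(n)}+1}^{T_0^{(n)}+d_r^{(n)}}E_j=d_r^{(n)}+o_{\mathbb P}(n)$, the fluctuation can be ``absorbed into the $o(n)$ of \eqref{d_r}'', so that it suffices to treat $\tilde U_r^{(n)}$. But the $o(n)$ in \eqref{d_r} is a \emph{deterministic} sequence fixed in advance; you cannot substitute a random quantity that, moreover, depends on the very same $E_j$'s entering the $Z_{i,r}^{(n)}$ you are counting. Knowing that $\tilde U_r^{(n)}$ has the claimed limit for every deterministic $o(n)$ does not, without an additional uniformity or sandwich argument, permit replacement by a random, dependent $o_{\mathbb P}(n)$. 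The paper flags exactly this obstruction---``the lack of any counterpart to \eqref{UZ} for $\hat U_r^{(n)}$''---and circumvents it by \emph{not} reducing $\hat U_r^{(n)}$ to the poissonized scheme at all. Instead it proves point-process convergence directly for the processes $\hat\Xi^{(n)}$ built from the original arrival ranks $Y_{i,r}^{(n)}$ (Theorem~\ref{conv_th_2}): first the poissonized version $\hat H^{(n)}\xrightarrow{vd}H$ is shown by the computation you sketch, and then a depoissonization lemma from \cite{Il19}, controlling $\mathbb P\{\hat\xi_r^{(n)}(I)\ne\hat\eta_r^{(n)}(I)\}$ on bounded intervals, transfers this to $\hat\Xi^{(n)}$. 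With $\hat\Xi^{(n)}\xrightarrow{vd}H$ in hand, a suitable choice of the free shifts $\delta_r^{(n)}$ in \eqref{hpsi_r} makes $\hat U_r^{(n)}$ \emph{exactly} equal to $\hat\Xi^{(n)}\bigl(\{r\}\times(\hat T_0^{(n)},+\infty)\bigr)$, and Skorokhod coupling finishes the proof.
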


\begin{remark}
It follows from \eqref{Gen_G} that the marginal probabaility generating functions of the limiting random element take the form:
\[\mathbb Eu^{G_r}=
\frac{r!}{r!+1-u},\quad u\in[0,1].\]
This implies that $G_r\sim\mathsf{Geom}\bigl(\frac{r!}{r!+1}\bigr)$, that is,
\[\mathbb P\{G_r=k\}=\Bigl(\frac 1{r!+1}\Bigr)^k\frac{r!}{r!+1},\quad k\in\mathbb N_0.\]

It is worth noting that various sums of $G_r$ also follow geometric distribution. Indeed, let $I\subset\mathbb N$ and $S_I=\sum_{r\in I}G_r$. Then by \eqref{Gen_G},
\[
\mathbb Eu^{S_I}=\mathbb E\prod_{r\in I}u^{G_r}=
\biggl(\mathrm e-\sum_{r\notin I}\frac 1{r!}-
u\sum_{r\in I}\frac 1{r!}\biggr)^{-1}=
\biggl(1+\sum_{r\in I}\frac 1{r!}-
u\sum_{r\in I}\frac 1{r!}\biggr)^{-1},\quad u\in[0,1].
\]
This means that $S_I\sim\mathsf{Geom}(P_I)$ with
$P_I=\bigl(1+\sum_{r\in I}\frac 1{r!}\bigr)^{-1}$. In particular,
\[\sum_{r=1}^\infty G_r=S_{\mathbb N}\sim\mathsf{Geom}\bigl(\mathrm e^{-1}\bigr).\]
\end{remark}

\begin{remark}
The limiting random sequence $(G_r,r\in\mathbb N)$ allows for an interpretation in terms of a balls-into-bins model. Consider an infinite set of bins of unlimited capacity, numbered $0,1,2,\ldots$ Each time a ball is placed into one of the bins, choosing the $r^{th}$ one with probability
\begin{equation}
\label{p_r}
p_r=\frac 1{\mathrm er!},\quad r\in\mathbb N_0,
\end{equation}
independently of the previous choices. In other words, the numbers of consecutive bins form an i.i.d.~sequence of $\mathsf{Pois}(1)$-distributed random variables. This process continues until a ball is placed into $0^{th}$ bin. Then $G_r$, $r\in\mathbb N$, describes the number of balls in the $r^{th}$ bin.

Indeed, taking \eqref{p_r} into account, we may expand the right-hand side of \eqref{Gen_G} in a geometric series:
\begin{equation}
\label{Gen_mod}
\mathbb E\Bigl(\prod_{r=1}^\infty u_r^{G_r}\Bigr)=\sum_{k=0}^\infty
p_0\Bigl(\sum_{r=1}^\infty p_ru_r\Bigr)^k=
\sum\biggl(\frac{(k_1+k_2+\ldots)!}{k_1!\cdot k_2!\cdot\ldots}p_0\prod_{r=1}^\infty p_r^{k_r}\biggr)\biggl(\prod_{r=1}^\infty u_r^{k_r}\biggr),
\end{equation}
where the sum on the right-hand side is over the set of all infinite sequences $(k_r,r\in\mathbb N)$ of non-negative integers, only a finite number of which are non-zero. Clearly, the probability generating function on the right-hand side of \eqref{Gen_mod} corresponds to the above balls-into-bins model.

Note that finite-dimensional counterparts of infinite-dimensional geometric (and, more generally, negative binomial) distributions like that of $(G_r,r\in\mathbb N)$ were introduced and studied in \cite{BN52} and \cite{JK77}; see also a survey of various multivariate geometric and negative binomial distributions in \cite{DR96}.
\end{remark}

As our final result, we give an infinite-dimensional extension of \eqref{ER}, the limit theorem by Erd\H{o}s and R\'enyi.

\begin{theorem}
\label{third_th}
Let $B_r$, $r\in\mathbb N_0$, be independent Gumbel-distributed random variables with distribution functions
\begin{equation}
\label{Br}
\mathbb P\{B_r<x\}=\exp\Bigl(-\frac{\mathrm e^{-x}}{r!}\Bigr),\quad x\in\mathbb R.
\end{equation}
Then
\begin{equation}
\label{third_th_eq}
\biggl(\frac{T_r^{(n)}}n-\ln n-r\ln\ln n,r\in\mathbb N_0\biggr)\xrightarrow
d(B_r,r\in\mathbb N_0)\quad\text{ in }\mathbb R^{\infty}\text{ as }n\to\infty.
\end{equation}
\end{theorem}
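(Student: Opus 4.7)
The plan is to adapt the poissonization plus point-process strategy from Theorems \ref{main_th} and \ref{second_th} to the joint behavior of maxima of gamma variables, extracting asymptotic independence across $r$ as a direct consequence of the extra $r\ln\ln n$ present in the centering.

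First I would replace $T_r^{(n)}$ by the poissonized maxima $\tilde T_r^{(n)} := \max_{i\le n} Z_{i,r}^{(n)}$. Since $k \mapsto \sum_{j=1}^k E_j$ is monotone in $k$, formulas \eqref{T} and \eqref{ZY} give $\tilde T_r^{(n)} = \sum_{j=1}^{T_r^{(n)}} E_j$. The Erd\H{o}s--R\'enyi result \eqref{ER} implies $T_r^{(n)} = \mathcal{O}_P(n\ln n)$, so the CLT for partial sums of i.i.d.\ $\mathsf{Exp}(1)$ variables yields
\[
\frac{\tilde T_r^{(n)}}{n} - \frac{T_r^{(n)}}{n} = \frac{1}{n}\sum_{j=1}^{T_r^{(n)}}(E_j - 1) = \mathcal{O}_P\!\left(\sqrt{\tfrac{\ln n}{n}}\right) = o_P(1),
\]
jointly for any finite collection of indices $r$ (the error can be dominated by $\max_{k\le Cn\ln n}|\sum_{j=1}^k(E_j-1)|$). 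It therefore suffices to prove the theorem for $\tilde T_r^{(n)}$.

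Next I would introduce the point processes
\[
\xi_r^{(n)} := \sum_{i=1}^n \delta_{X_{i,r}^{(n)}}, \qquad X_{i,r}^{(n)} := \frac{Z_{i,r}^{(n)}}{n} - \ln n - r\ln\ln n,
\]
on $\mathbb R$, whose rightmost point is exactly $\tilde T_r^{(n)}/n - \ln n - r\ln\ln n$. Since $Z_{i,r}^{(n)}\sim\mathsf{\Gamma}(r+1,1/n)$, the density of $X_{i,r}^{(n)}$ equals $(x+\ln n+r\ln\ln n)^r\,\mathrm e^{-x}/(n(\ln n)^r\, r!)$, whence the marginal intensity $n\,\mathbb P(X_{1,r}^{(n)}\in dx)/dx$ converges locally uniformly to $\mathrm e^{-x}/r!$. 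For $r<r'$, the independence of $Z_{i,r}^{(n)}$ and $Z_{i,r'}^{(n)}-Z_{i,r}^{(n)}$ as gamma variables yields a joint density for $(X_{i,r}^{(n)},X_{i,r'}^{(n)})$ of order $(\ln\ln n)^{r'-r-1}/(n(\ln n)^{r'-r})$ on compact sets, so that the joint intensity $n\cdot(\text{joint density})$ tends to zero. Using the independence across $i$, the joint Laplace functional can be written as
\[
\mathbb E\exp\!\Bigl(-\sum_{r=0}^R \xi_r^{(n)}(\phi_r)\Bigr) = \Bigl(1 - \frac{a_n}{n}\Bigr)^n, \qquad a_n := n\,\mathbb E\Bigl[1 - \prod_{r=0}^R \mathrm e^{-\phi_r(X_{1,r}^{(n)})}\Bigr],
\]
and analyzed by inclusion--exclusion: the single-coordinate terms contribute $\sum_{r=0}^R\int(1-\mathrm e^{-\phi_r(x)})\mathrm e^{-x}/r!\, dx$ via the marginal intensities, while every cross-term vanishes by the joint intensity estimate. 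This identifies the limit as the Laplace functional of a family of \emph{independent} Poisson point processes with intensities $\mathrm e^{-x}/r!$. Since each limiting intensity is integrable at $+\infty$, the rightmost-point map is almost surely continuous at the limit, and the continuous mapping theorem yields the joint distributional convergence of $\tilde T_r^{(n)}/n-\ln n-r\ln\ln n$ to independent Gumbels $B_r$ as in \eqref{Br}; convergence of all finite-dimensional projections is equivalent to convergence in $\mathbb R^\infty$.

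The main obstacle is precisely the joint intensity estimate above, which is the heart of the asymptotic independence. The role of the additional $r\ln\ln n$ in the centering is to force any single coupon type $i$ participating simultaneously in the extremes for indices $r$ and $r'>r$ to have its increment $Z_{i,r'}^{(n)}-Z_{i,r}^{(n)}\sim\mathsf{\Gamma}(r'-r,1/n)$ fall in a tail of size $(r'-r)n\ln\ln n$, an event of probability of order $(\ln\ln n)^{r'-r-1}/(\ln n)^{r'-r}$. This separation of scales decouples the maxima and explains the asymptotic independence in \eqref{third_th_eq} despite the trivial monotonicity $T_0^{(n)}<T_1^{(n)}<\ldots$
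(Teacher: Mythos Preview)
Your argument is correct and, at the point-process level, essentially coincides with the paper's: you use the same $r$-dependent centering $Z_{i,r}^{(n)}/n-\ln n-r\ln\ln n$, the same marginal intensity computation giving $\mathrm e^{-x}/r!$, and the same vanishing cross-term estimate $n\cdot f_{r,r'}^{(n)}=O\bigl((\ln\ln n)^{r'-r-1}/(\ln n)^{r'-r}\bigr)\to0$ that delivers asymptotic independence. The paper packages this as Theorem~\ref{conv_th_2} and checks convergence via Kallenberg's void-probability-plus-mean criterion, whereas you use Laplace functionals and inclusion--exclusion; these are interchangeable.

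Where you genuinely diverge is in the depoissonization. The paper depoissonizes at the point-process level (proving $\hat\Xi^{(n)}\xrightarrow{vd}H$ for the processes built from $Y_{i,r}^{(n)}$) by invoking a coupling lemma from \cite{Il19} and Kallenberg's Theorem~4.15 over a dissecting ring. You instead depoissonize only the maxima, using $\tilde T_r^{(n)}=\sum_{j\le T_r^{(n)}}E_j$, the independence of $(Y_{i,r}^{(n)})$ from $(E_j)$, and a maximal inequality to get $(\tilde T_r^{(n)}-T_r^{(n)})/n=o_P(1)$ jointly in $r\le R$. Your route is more elementary and entirely sufficient for Theorem~\ref{third_th}; the paper's route is heavier but yields the full non-poissonized point-process limit, which it also needs for Theorem~\ref{second_th} (where no analogue of \eqref{UZ} is available). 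One small point worth making explicit in your write-up: to apply the continuous mapping theorem to the rightmost-point functional you need the half-lines $(x,+\infty)$ to be relatively compact in the state space, which is why the paper works on $\mathbb R\cup\{+\infty\}$; you allude to this via integrability of $\mathrm e^{-x}/r!$ at $+\infty$, but the compactification should be stated.
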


In particular, this result allows obtaining limit distributions for times between completions of different collections. Denote by $\Delta_{r_1,r_2}^{(n)}=T_{r_2}^{(n)}-T_{r_1}^{(n)}$, $r_2>r_1$, such an inter-completion time.

\begin{corollary}
\label{cor}
Let $L_{r_1,r_2}$ be a logistic random variable with distribution function
\[\mathbb P\bigl\{L_{r_1,r_2}<x\bigr\}=\frac{r_2!}{r_2!+r_1!\,\mathrm e^{-x}},\quad x\in\mathbb R.\]
Then
\begin{equation}
\label{cor_eq}
\frac{\Delta_{r_1,r_2}^{(n)}}n-(r_2-r_1)\ln\ln n\xrightarrow dL_{r_1,r_2} \quad\text{as }n\to\infty.
\end{equation}
\end{corollary}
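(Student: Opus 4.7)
The plan is to deduce Corollary \ref{cor} directly from Theorem \ref{third_th} by writing the inter-completion time as a telescoping difference and then identifying the distribution of the limit. Concretely, I would observe that
\[
\frac{\Delta_{r_1,r_2}^{(n)}}n-(r_2-r_1)\ln\ln n=
\Bigl(\frac{T_{r_2}^{(n)}}n-\ln n-r_2\ln\ln n\Bigr)-
\Bigl(\frac{T_{r_1}^{(n)}}n-\ln n-r_1\ln\ln n\Bigr).
\]
Since convergence in $\mathbb R^\infty$ implies convergence of every finite-dimensional projection, Theorem \ref{third_th} yields joint distributional convergence of the two bracketed quantities to $(B_{r_1},B_{r_2})$ with $B_{r_1}\perp\!\!\!\perp B_{r_2}$. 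Applying the continuous mapping theorem to the subtraction map $(x,y)\mapsto y-x$ gives
\[
\frac{\Delta_{r_1,r_2}^{(n)}}n-(r_2-r_1)\ln\ln n\xrightarrow d B_{r_2}-B_{r_1}.
\]

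The remaining step is to check that $B_{r_2}-B_{r_1}$ has the prescribed logistic law. The cleanest way is to use the representation $B_r\stackrel d=-\ln(r!\,W_r)$, where $W_r\sim\mathsf{Exp}(1)$ (which is immediate from \eqref{Br}), taking $W_{r_1}$ and $W_{r_2}$ independent. Then
\[
B_{r_2}-B_{r_1}\stackrel d=\ln\frac{r_1!\,W_{r_1}}{r_2!\,W_{r_2}},
\]
and a short computation with the ratio of two independent $\mathsf{Exp}(1)$ variables gives $\mathbb P\bigl\{r_1!W_{r_1}/(r_2!W_{r_2})<t\bigr\}=\frac{r_2!\,t}{r_1!+r_2!\,t}$ for $t>0$; substituting $t=\mathrm e^x$ produces the logistic cdf $r_2!/(r_2!+r_1!\mathrm e^{-x})$ of the statement. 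Alternatively, one may perform the convolution $\int f_{B_{r_1}}(y)F_{B_{r_2}}(x+y)\,dy$ directly, after the substitution $s=\mathrm e^{-y}/r_1!$, to arrive at the same answer.

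There is no real obstacle: once Theorem \ref{third_th} is in hand, the convergence step is an immediate application of the continuous mapping theorem, and the identification of the limiting distribution is a routine calculation. The only point worth emphasizing is that the independence of $B_{r_1}$ and $B_{r_2}$ in the limit — which is the surprising content of Theorem \ref{third_th} rather than of this corollary — is what produces a clean logistic law; without it, one would obtain a more involved distribution reflecting the dependence structure of the $T_r^{(n)}$ at finite $n$.
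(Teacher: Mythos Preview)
Your proposal is correct and matches the paper's own argument: the paper likewise deduces \eqref{cor_eq} from \eqref{third_th_eq} by taking the difference and invoking the fact that $B_{r_2}-B_{r_1}\overset d=L_{r_1,r_2}$ for independent Gumbel variables. The only cosmetic difference is that the paper mentions characteristic functions for this last identification, whereas you use the exponential representation $B_r\overset d=-\ln(r!\,W_r)$; both are routine.
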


It is well known (and may be easily checked by means of characteristic functions) that the difference of two independent Gumbel-distributed random variables has a logistic distribution. Particularly, $B_{r_2}-B_{r_1}\overset d=L_{r_1,r_2}$.
Thus, \eqref{cor_eq} follows from \eqref{third_th_eq}. 

\section{Convergence of associated point processes I.
\\Thinned processes}
\label{sec_conv_I}

One of the key points in the proof of Theorem \ref{main_th} is convergence of specially constructed point processes to a Poisson one. We now proceed to the corresponding construction.

For fixed $n\in\mathbb N$ and $r\in\mathbb N_0$, let
\begin{gather}
\psi^{(n)}(x)=\frac xn-\ln n,\quad x\in\mathbb R,\label{psi}\\
\eta_r^{(n)}=\sum_{i=1}^n\delta_{\psi^{(n)}\bigl(Z_{i,r}^{(n)}\bigr)},\label{eta}
\end{gather}
where $\delta_u$ stands for the Dirac measure $\mathds 1\{u\in\cdot\}$. The processes $\eta_r^{(n)}$ describe $(r+1)^{th}$ arrivals of different coupon types in the poissonized scheme. Note that in a similar case in \cite{Il19}, in order to provide the necessary convergence, we used an $r$-dependent centering/normalizing function
\begin{equation}
\psi_r^{(n)}(x)=\frac xn-\ln n-r\ln\ln n,
\quad x\in\mathbb R.\label{psi_r}
\end{equation}
In our case, however, we want to consider the numbers of empty spots in different albums at the same point in time, namely when the $0^{th}$ album is completed. Hence, we have to deal with the same centering for different $r$. So, we will achieve the desired convergence in a different way --- by means of thinnings.

For a proper point process $\eta$, let us denote by $T_p\eta$, $p\in[0,1]$, its $p$-thinning, i.e.~the point process which independently keeps the points of $\eta$ with probability $p$ and removes otherwise (see, e.g., \cite{LP}, Section 5.3 for details). On the space
\begin{equation}
\label{X}
\mathbb X=\mathbb N_0\times\bigl(\mathbb R\cup\{+\infty\}\bigr),
\end{equation}
endowed with some relevant metric, say,
\begin{equation}
\label{d}
d\bigl((r_1,x),(r_2,y)\bigr)=|r_2-r_1|+
|\mathrm e^{-y}-\mathrm e^{-x}|,
\end{equation}
consider the point processes $H^{(n)}$, $n\ge3$, given as follows: for $B_r\in\mathfrak B\bigl(\mathbb R\cup\{+\infty\}\bigr)$, $r\in\mathbb N_0$, define 
\begin{equation}
\label{H}
H^{(n)}\Bigl(\bigcup_{r=0}^\infty\bigl(\{r\}\times B_r\bigr)\Bigr)=\sum_{r=0}^\infty\bigl(T_{\ln^{-r}n}\eta_r^{(n)}\bigr)(B_r).
\end{equation}
(We require $n\ge3$ in order to have $\ln^{-r}n\in[0,1]$ for all $r\in\mathbb N_0$.)
In other words, we are thinning out the processes $\eta_r^{(n)}$ and glue them into one \lq\lq multilevel\rq\rq\ point process $H^{(n)}$. Note that, by construction in \eqref{eta},
\begin{equation}
\label{empty}
\bigl(T_{\ln^{-r}n}\eta_r^{(n)}\bigr)\bigl(\{+\infty\}\bigr)=
\eta_r^{(n)}\bigl(\{+\infty\}\bigr)=0
\end{equation}
anyway. The reason we, nevertheless, consider the semi-compactified real axis $\mathbb R\cup\{+\infty\}$ instead of just $\mathbb R$ will become clear from what follows (see Section \ref{proof_sec}).

Before stating the main theorem of this section, we reformulate the basic definitions related to convergence of point processes as applied to our problem (for a detailed exposition in the abstract setting, see \cite{Res87}, \cite{Res07}, or \cite{Kal}). Let $M_p(\mathbb X)$ denote the space of all locally finite (with respect to $d$ in \eqref{d}) point measures on $\mathbb X$ given by \eqref{X}. For $\mu,\mu_1,\mu_2,\ldots\in M_p(\mathbb X)$, $\mu_n$ are said to converge vaguely to $\mu$ (denoted by $\mu_n\xrightarrow{v}\mu$) if
$\int_\mathbb X f\,\mathrm d\mu_n\to\int_\mathbb X f\,\mathrm d\mu$ for each 
continuous compactly supported non-negative test function $f$ defined on $\mathbb X$. In our case, this means that
\[
\int_{\mathbb R\cup\{+\infty\}}g(t)\,\mu_n\bigl(\{r\}\times\mathrm dt\bigr)\to\int_{\mathbb R\cup\{+\infty\}}g(t)\,\mu\bigl(\{r\}\times\mathrm dt\bigr)
\]
for each $r\in\mathbb N_0$ and each continuous compactly supported $g\colon\mathbb R\cup\{+\infty\}\to[0,+\infty)$.
As usual, the set $M_p(\mathbb X)$, equipped with the topology of the above convergence, can be metrized as a complete separable metric space. This setting allows to consider the distributional convergence of point processes $H^{(3)},H^{(4)}\ldots$, denoted as $H^{(n)}\xrightarrow{vd}H$. The main result of this section, Theorem \ref{conv_th} below, asserts that the point processes $H^{(n)}$ converge in this sense toward a non-homogeneous Poisson process.

\begin{theorem}
\label{conv_th}
Let $H$ be a Poisson point process on $\mathbb X$ with intensity measure $\lambda$ given by
\begin{equation}
\label{lambda}
\lambda\Bigl(\bigcup_{r=0}^\infty\bigl(\{r\}\times B_r\bigr)\Bigr)=
\sum_{r=0}^{\infty}\frac 1{r!}\int_{B_r}\mathrm e^{-x}\,\mathrm dx,\quad B_r\in\mathfrak B\bigl(\mathbb R\cup\{+\infty\}\bigr).
\end{equation}
Then $H^{(n)}\xrightarrow{vd}H$ as $n\to\infty$.
\end{theorem}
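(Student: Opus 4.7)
The plan is to apply Kallenberg's theorem for null arrays of i.i.d.\ point processes (see, e.g.,~\cite{Kal}, Theorem~16.18, or~\cite{Res87}, Proposition~3.21). To set this up, note that the vectors $\bigl(Z_{i,0}^{(n)},Z_{i,1}^{(n)},\ldots\bigr)$, $i\le n$, are i.i.d., and the thinning coin flips applied to $\eta_r^{(n)}$ for different $r$ may be taken mutually independent and independent of the $Z_{i,r}^{(n)}$'s. Hence $H^{(n)}=\sum_{i=1}^n\xi_i^{(n)}$ with
\[
\xi_i^{(n)}=\sum_{r=0}^\infty\varepsilon_{i,r}^{(n)}\,\delta_{(r,\,\psi^{(n)}(Z_{i,r}^{(n)}))},\qquad \varepsilon_{i,r}^{(n)}\sim\mathsf{Ber}(\ln^{-r}n),
\]
the $\varepsilon_{i,r}^{(n)}$ being independent of each other and of the $Z_{i,r}^{(n)}$. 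Kallenberg's criterion then reduces $H^{(n)}\xrightarrow{vd}H$ to two conditions on a generating semiring of $\lambda$-continuity sets: \textbf{(a)} $n\,\mathbb P\{\xi_1^{(n)}(A)\ge 1\}\to\lambda(A)$, and \textbf{(b)} $n\,\mathbb P\{\xi_1^{(n)}(A)\ge 2\}\to 0$. Since the metric \eqref{d} separates distinct levels by at least $1$, it suffices to take $A=\bigcup_{r=0}^R\bigl(\{r\}\times(a_r,b_r]\bigr)$ with finite $R$ and $-\infty<a_r<b_r\le+\infty$.

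For (a), consider first a single-level rectangle $A_r=\{r\}\times(a,b]$. From $Z_{1,r}^{(n)}\sim\mathsf{\Gamma}(r+1,1/n)$, the density of $\psi^{(n)}(Z_{1,r}^{(n)})$ is $\tfrac{(y+\ln n)^r}{r!\,n}\mathrm e^{-y}$ on $(-\ln n,+\infty)$, so
\[
n\,\mathbb P\{\xi_1^{(n)}(A_r)\ge 1\}=n\ln^{-r}n\int_a^b\frac{(y+\ln n)^r}{r!\,n}\mathrm e^{-y}\,\mathrm dy=\int_a^b\frac{(1+y/\ln n)^r}{r!}\mathrm e^{-y}\,\mathrm dy\xrightarrow[n\to\infty]{}\frac 1{r!}\int_a^b\mathrm e^{-y}\,\mathrm dy=\lambda(A_r)
\]
by dominated convergence. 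For multilevel $A$, each single-level probability is $\mathcal O(1/n)$, so inclusion–exclusion gives $n\,\mathbb P\{\xi_1^{(n)}(A)\ge 1\}\to\sum_{r=0}^R\lambda(A_r)=\lambda(A)$.

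For (b), since $\xi_1^{(n)}$ places at most one atom on each level, the event $\{\xi_1^{(n)}(A)\ge 2\}$ forces two distinct levels $r_1<r_2\le R$ to fire simultaneously. Using independence of the Bernoulli thinnings together with the marginal bound from (a),
\[
\mathbb P\{\xi_1^{(n)}(A_{r_1})\ge 1,\,\xi_1^{(n)}(A_{r_2})\ge 1\}\le\ln^{-r_1-r_2}n\cdot\mathbb P\{\psi^{(n)}(Z_{1,r_1}^{(n)})\in(a_{r_1},b_{r_1}]\}=\mathcal O\bigl(\ln^{-r_2}n/n\bigr),
\]
and since $r_2\ge 1$, summing over the finitely many such pairs and multiplying by $n$ yields $\mathcal O(1/\ln n)\to 0$.

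The only delicate point is the \emph{thinning arithmetic} in (b): the independence of the two Bernoulli thinnings supplies a surplus factor $\ln^{-r_2}n$ that beats the $1/n$ scaling, while only the single $\mathsf{\Gamma}(r_1+1,1/n)$ density contributes a $\ln^{r_1}n/n$ factor from the marginal. The residual dependence among the $Z_{i,r}^{(n)}$ for fixed $i$ (they are ordered) is harmless, since it is the Bernoulli thinnings — independent across $r$ — that do all the decorrelation work needed for the Poisson limit.
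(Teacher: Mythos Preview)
Your proof is correct and takes a genuinely different route from the paper. The paper verifies Kallenberg's criterion based on avoidance probabilities and first moments (Theorem~4.18 in \cite{Kal}): it expands $\mathbb P\{H^{(n)}(U)=0\}$ as an $n^{\text{th}}$ power via inclusion--exclusion and then computes the joint density $f_{r_1,r_2}^{(n)}$ of $\bigl(\psi^{(n)}(Z_{i,r_1}^{(n)}),\psi^{(n)}(Z_{i,r_2}^{(n)})\bigr)$ explicitly to show that the two-level term $n\ln^{-r_1-r_2}n\cdot P_{r_1,r_2}^{(n)}$ vanishes. You instead write $H^{(n)}$ as an i.i.d.\ row sum $\sum_{i\le n}\xi_i^{(n)}$ and apply the null-array criterion, and for the cross term you bypass the joint density entirely: bounding $P_{r_1,r_2}^{(n)}\le P_{r_1}^{(n)}=\mathcal O(\ln^{r_1}n/n)$ and letting the independent thinning coin at level $r_2$ supply the surplus factor $\ln^{-r_2}n$ already gives $n\ln^{-r_1-r_2}n\,P_{r_1,r_2}^{(n)}=\mathcal O(\ln^{-r_2}n)\to0$. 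This is shorter and isolates the mechanism (the Bernoulli coins, not the joint law of the $Z$'s, do the decorrelation), whereas the paper's computation of $f_{r_1,r_2}^{(n)}$, while more laborious here, is the template reused later in Theorem~\ref{conv_th_2}, where no thinning is present and the $r$-dependent centering forces one to work with the joint density anyway.
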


\begin{remark}
\label{ind_PPP}
By the superposition theorem for Poisson processes (see, e.g., Theorem 3.3 in \cite{LP}), the additive structure of $\lambda$ in \eqref{lambda} implies that different levels $H\bigl(\{r\}\times\cdot\bigr)$, $r\in\mathbb N_0$, of the limiting process $H$ are independent. In other words, the point processes $T_{\ln^{-r}n}\eta_r^{(n)}$ are asymptotically independent. This fact is rather surprising, since without thinning no asymptotic independence would have been expected. Indeed, by \eqref{psi} and \eqref{eta}, $\eta_r^{(n)}$ increase in $r$:
$\psi^{(n)}\bigl(Z_{i,r_1}^{(n)}\bigr)<\psi^{(n)}\bigl(Z_{i,r_2}^{(n)}\bigr)$ for $r_1<r_2$ and fixed $n$, $i$, which excludes any independence.
\end{remark}

\begin{remark}
\label{nhg_PPP}
The levels $H\bigl(\{r\}\times\cdot\bigr)$ allow for a simple interpretation (see Remark 3.2 in \cite{Il19}). Let $\zeta$ be a stationary unit-rate Poisson point process restricted to $(0,+\infty)$, and put
\begin{equation*}
h(x)=-\ln r!-\ln x,\quad x>0.
\end{equation*}
Then, $H\bigl(\{r\}\times\cdot\bigr)\overset d=\sum_{x\in\supp\zeta}\delta_{h(x)}$.
\end{remark}

\begin{proof}[Proof of Theorem \ref{conv_th}] Let $\mathcal U$ denote the ring of all Borel subsets in $\mathbb X$, bounded with respect to $d$ in \eqref{d}. So,
\begin{equation}
\label{bound_set}
\mathcal U=\Bigl\{\bigcup_{r=0}^s\bigl(\{r\}\times B_r\bigr),
s\in\mathbb N_0,B_r\in\mathfrak B\bigl(\mathbb R\cup\{+\infty\}\bigr)\text{ and are bounded from below}\Bigr\}.
\end{equation}
Since the measure $\lambda$ given by \eqref{lambda} is diffuse, $H$ is a simple Poisson point process (see, e.g., Proposition 6.9 in \cite{LP}). Thus, by Theorem 4.18 in \cite{Kal}, it suffices only to prove that, for each $U\in\mathcal U$,
\begin{gather}
\lim_{n\to\infty}\mathbb P\{H^{(n)}(U)=0\}=\mathbb P\{H(U)=0\},
\label{cond_a}\\
\lim_{n\to\infty}\mathbb EH^{(n)}(U)=\mathbb EH(U).
\label{cond_b}
\end{gather}
Note that, by \eqref{empty} and \eqref{lambda}, $H^{(n)}\bigl(\mathbb N_0\times\{+\infty\}\bigr)=H\bigl(\mathbb N_0\times\{+\infty\}\bigr)=0$ a.s. So, we may assume that $U\subset\mathbb N_0\times\mathbb R$.

First we take up the proof of \eqref{cond_a}. According to \eqref{bound_set} and \eqref{H},
\begin{equation}
\begin{aligned}
\label{p_eq_0}
\mathbb P\{&H^{(n)}(U)=0\}=\mathbb P\Bigl\{H^{(n)}\Bigl(\bigcup_{r=0}^s\bigl(\{r\}\times B_r\bigr)\Bigr)=0\Bigr\}\\&=\mathbb P\Bigl\{\sum_{r=0}^s \bigl(T_{\ln^{-r}n}\eta_r^{(n)}\bigr)(B_r)=0\Bigr\}=
\mathbb P\bigl\{\bigl(T_{\ln^{-r}n}\eta_r^{(n)}\bigr)(B_r)=0
\text{ for }r=0,\ldots,s\bigr\}.
\end{aligned}
\end{equation}
For $0\le r_1<\ldots<r_m\le s$, denote
\begin{equation}
\label{P}
P_{r_1,\ldots,r_m}^{(n)}=
\mathbb P\bigl\{\psi^{(n)}\bigl(Z_{i,r_1}^{(n)}\bigr)\in B_{r_1},\ldots,\psi^{(n)}\bigl(Z_{i,r_m}^{(n)}\bigr)\in B_{r_m}
\bigr\},
\end{equation}
which does not depend on $i$ by virtue of the i.i.d.~property of $Z_{i,r}^{(n)}$.
By this property again, \eqref{p_eq_0} and inclusion-exclusion yield
\begin{align*}
\mathbb P\{H^{(n)}(U)=0\}&=\Bigl(1-\sum_{0\le r_1\le s}\ln^{-r_1}n\cdot P_{r_1}^{(n)}\\&+\sum_{0\le r_1<r_2\le s}\ln^{-r_1-r_2}n\cdot P_{r_1,r_2}^{(n)}-\ldots+(-1)^{s+1}\ln^{-1-\ldots-s}n\cdot P_{0,1,\ldots,s}^{(n)}\Bigr)^n.
\end{align*}
Hence,
\begin{align*}
\lim_{n\to\infty}\ln\mathbb P\{H^{(n)}(U)=0\}=&-\sum_{0\le r_1\le s}\lim_{n\to\infty}\bigl(n\ln^{-r_1}n\cdot P_{r_1}^{(n)}\bigr)\\&+
\sum_{0\le r_1<r_2\le s}\lim_{n\to\infty}\bigl(n\ln^{-r_1-r_2}n\cdot P_{r_1,r_2}^{(n)}\bigr)-\ldots\\&+(-1)^{s+1}\lim_{n\to\infty}
\bigl(n\ln^{-1-\ldots-s}n\cdot P_{0,1,\ldots,s}^{(n)}\bigr).
\end{align*}
We now prove that the limits in the first sum equal
$\frac 1{r_1!}\int_{B_{r_1}}\mathrm e^{-x}\,\mathrm dx$,
while those in the second sum vanish. Since all subsequent terms are dominated by the latter ones, this combined with \eqref{lambda} will prove \eqref{cond_a}.

As $Z_{i,0}^{(n)},Z_{i,1}^{(n)}-Z_{i,0}^{(n)},\ldots,Z_{i,s}^{(n)}-Z_{i,s-1}^{(n)}$ are independent $\mathsf{Exp}\bigl(\frac 1n\bigr)$, the densities $f_{r_1}^{(n)}$ and $f_{r_1,r_2}^{(n)}$ of $\psi^{(n)}\bigl(Z_{i,r_1}^{(n)}\bigr)$ and $\bigl(\psi^{(n)}\bigl(Z_{i,r_1}^{(n)}\bigr),
\psi^{(n)}\bigl(Z_{i,r_2}^{(n)}\bigr)\bigr)$, respectively, can be easily calculated:
\begin{gather}\label{f_1}
f_{r_1}^{(n)}(x)=\frac {(x+\ln n)^{r_1}\mathrm e^{-x}}{n\,r_1!}\cdot\mathds 1\{-\ln n\le x\},\qquad x\in\mathbb R,\\
f_{r_1,r_2}^{(n)}(x,y)=\frac {(x+\ln n)^{r_1}(y-x)^{r_2-r_1-1}\mathrm e^{-y}}{n\,r_1!\,(r_2-r_1-1)!}\cdot\mathds 1\{-\ln n\le x\le y\},
\qquad x,y\in\mathbb R.\label{f_2}
\end{gather}
Recall that $B_{r_1}$ and $B_{r_2}$ were assumed to be bounded from below. Hence, by \eqref{P},
\begin{equation}
\begin{aligned}
\label{1-lim}
n&\ln^{-r_1}n\cdot P_{r_1}^{(n)}=n\ln^{-r_1}n\cdot\int_{B_{r_1}}f_{r_1}^{(n)}(x)\,\mathrm dx\\&=
\frac 1{r_1!}\int_{B_{r_1}}\Bigl(1+\frac x{\ln n}\Bigr)^{r_1}\mathrm e^{-x}\cdot\mathds 1\{-\ln n\le x\}\,\mathrm dx\to\frac 1{r_1!}\int_{B_{r_1}}
\mathrm e^{-x}\,\mathrm dx\quad\text{as $n\to\infty$}
\end{aligned}
\end{equation}
due to dominated convergence. Next, 
\begin{multline*}
n\ln^{-r_1-r_2}n\cdot P_{r_1,r_2}^{(n)}=n\ln^{-r_1-r_2}n\cdot\iint_{B_{r_1}\times B_{r_2}}
f_{r_1,r_2}^{(n)}(x,y)\,\mathrm dx\,\mathrm dy\\=
\frac {\ln^{-r_2}n}{r_1!\,(r_2-r_1-1)!}\iint_{B_{r_1}\times B_{r_2}}\Bigl(1+\frac x{\ln n}\Bigr)^{r_1}(y-x)^{r_2-r_1-1}\mathrm e^{-y}\cdot\mathds 1\{-\ln n\le x\le y\}\,\mathrm dx\,\mathrm dy,
\end{multline*}
which vanishes as $n\to\infty$ again by dominated convergence. This completes the proof of \eqref{cond_a}.

We now turn to the proof of \eqref{cond_b}. Similarly to \eqref{p_eq_0}, we have
\begin{equation*}
\mathbb EH^{(n)}(U)=\mathbb EH^{(n)}\Bigl(\bigcup_{r=0}^s\bigl(\{r\}\times B_r\bigr)\Bigr)\\=\sum_{r=0}^s\mathbb E\bigl(T_{\ln^{-r}n}\eta_r^{(n)}\bigr)(B_r)=
\sum_{r=0}^s\ln^{-r}n\cdot\mathbb E\eta_r^{(n)}(B_r).
\end{equation*}
It follows from \eqref{eta} and the i.i.d.~property of $Z_{i,r}^{(n)}$ that $\eta_r^{(n)}(B_r)\sim\mathsf{Bin}\bigl(n,P_r^{(n)}\bigr)$ with $P_r^{(n)}$ given by \eqref{P}. So, by \eqref{1-lim},
\[
\mathbb EH^{(n)}(U)=\sum_{r=0}^sn\ln^{-r}n\cdot P_r^{(n)}\to\sum_{r=0}^s\frac 1{r!}\int_{B_r}\mathrm e^{-x}\,\mathrm dx\quad\text{as $n\to\infty$},
\]
which equals $\mathbb EH(U)$ due to \eqref{lambda}. This concludes the proof of \eqref{cond_b} and thus of Theorem \ref{conv_th}.
\end{proof}

\section{Thinning trick}
\label{th_tr}

In this section, we consider a somewhat unusual approach to proving limit theorems, which is the basic tool in the proof of Theorem \ref{main_th}.
This approach seems to be new, and may hopefully prove to be useful elsewhere as well.

For an $\mathbb N_0$-valued random variable $X$, define its $p$-thinning, $p\in[0,1]$, by
\begin{equation}
\label{thin}
p\odot X=\sum_{i=1}^X\varepsilon_i,
\end{equation}
where $\varepsilon_i$ are $\mathsf{Bin}(1,p)$-distributed and independent of each other and of $X$. Note that, in the notation of the previous section, $(T_p\eta)(B)\overset d=p\odot\eta(B)$ for a proper point process $\eta$ and a Borel set $B$. This operation, going back to \cite{Ren}, was then used in \cite{SvH} to introduce the concepts of discrete self-decomposability and stability.

The idea behind the proposed technique is to replace the normalization by thinning. In other words, instead of proving that $p^{(n)}X^{(n)}\xrightarrow dY$ with some constants $p^{(n)}\to0$ and a limiting random variable $Y$, we will prove that $p^{(n)}\odot X^{(n)}\xrightarrow dZ$ with some new limiting random variable $Z$. The connection between the distributions of $Y$ and $Z$ can be guessed from the following examples. For $X^{(n)}=n$ a.s., we have $\frac 1n X^{(n)}=1\xrightarrow d1$ and $\frac 1n\odot X^{(n)}\xrightarrow dZ\sim\mathsf{Pois}(1)$ by the Poisson limit theorem. More generally, consider an i.i.d.~sequence $(\xi_i,i\in\mathbb N)$ with $\mathbb E\xi_i=\lambda$, and denote $X^{(n)}=\sum_{i=1}^n\xi_i$. Due to the law of large numbers, $\frac 1n X^{(n)}\xrightarrow d\lambda$, and, by Theorem 3.1 in \cite{HJK}, $\frac 1n\odot X^{(n)}\xrightarrow dZ\sim\mathsf{Pois}(\lambda)$. All this suggests that, in general, $Z$ must have the mixed Poisson distribution with mixing distribution of $Y$ (see, e.g., Chapter 2 in \cite{Gran}).
This means that
\begin{equation}
\label{univ_mix_Pois}
\mathbb P\{Z=k\}=\int_{[0,+\infty)}\frac{\mathrm e^{-y}y^k}{k!}\,F_Y(\mathrm dy),
\quad k\in\mathbb N_0,
\end{equation}
where $F_Y$ stands for the distribution function of $Y$.

The main result of this section, Theorem \ref{trick_th} below, justifies this approach. With an eye to the future, we will state and prove it in multidimensional form. Let $\mathbf Y=(Y_1,\ldots,Y_s)$ be a random vector with a.s.\ non-negative components. By analogy with \eqref{univ_mix_Pois}, the random vector $\mathbf Z=(Z_1,\ldots,Z_s)$ is said to have a multivariate mixed Poisson distribution with mixing distribution of $\mathbf Y$ (see \cite{FLT} or \cite{KP}) if
\begin{equation}
\begin{aligned}
\label{multiv_mix_Pois}
\mathbb P\{Z_1&=k_1,\ldots,Z_s=k_s\}\\&=
\idotsint_{[0,+\infty)^s}\frac{\mathrm e^{-y_1}y_1^{k_1}}{k_1!}\cdot\ldots\cdot
\frac{\mathrm e^{-y_s}y_s^{k_s}}{k_s!}\,F_{\mathbf Y}(\mathrm dy_1,\ldots,\mathrm dy_s),
\quad k_1,\ldots,k_s\in\mathbb N_0.
\end{aligned}
\end{equation}

\begin{remark}
\label{trick_rem}
In what follows, we will need another equivalent interpretation of the multivariate mixed Poisson distribution. Namely, we may define $\mathbf Z$ by $Z_r=N_r(Y_r)$, $r=1,\ldots,s$, where $N_r$ stand for unit-rate Poisson counting processes, independent of each other and of~$\mathbf Y$.
\end{remark}

\begin{theorem}
\label{trick_th}
Let $\mathbf X^{(n)}=\bigl(X_1^{(n)},\ldots,X_s^{(n)}\bigr)$, $n\in\mathbb N$, be a sequence of random vectors with $\mathbb N_0$-valued components, and $\bigl(p_1^{(n)},\ldots,p_s^{(n)}\bigr)$, $n\in\mathbb N$, a non-random sequence with $\lim_{n\to\infty}p_r^{(n)}=0$, $r=1,\ldots,s$. Assume that
\begin{equation}
\label{px_to_z}
\bigl(p_1^{(n)}\odot X_1^{(n)},\ldots,p_s^{(n)}\odot X_s^{(n)}\bigr)\xrightarrow d\bigl(Z_1,\ldots,Z_s\bigr)\quad\text{ as $n\to\infty$},
\end{equation}
and the limiting random vector on the right-hand side has a multivariate mixed Poisson distribution \eqref{multiv_mix_Pois}. Suppose additionally that
\begin{equation}
\label{E_cond}
\sup_{n\in\mathbb N}p_r^{(n)}\mathbb E X_r^{(n)}<\infty,\quad r=1,\ldots,s.
\end{equation}
	
Then
\[
\bigl(p_1^{(n)}X_1^{(n)},\ldots,p_s^{(n)}X_s^{(n)}\bigr)\xrightarrow d\bigl(Y_1,\ldots,Y_s\bigr)\quad\text{ as $n\to\infty$}.
\]
\end{theorem}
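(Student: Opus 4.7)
The plan is a subsequence argument combining tightness, delivered by \eqref{E_cond}, with the uniqueness of the mixing distribution in a multivariate mixed Poisson law. Note first that Markov's inequality and \eqref{E_cond} give
\[
\mathbb P\bigl\{p_r^{(n)} X_r^{(n)} > K\bigr\} \le K^{-1} \sup_{n\in\mathbb N} p_r^{(n)}\mathbb E X_r^{(n)},
\]
which tends to $0$ as $K\to\infty$ uniformly in $n$. Hence the sequence of random vectors $\bigl(p_1^{(n)} X_1^{(n)},\ldots,p_s^{(n)} X_s^{(n)}\bigr)$ is tight in $[0,\infty)^s$, and by Prokhorov's theorem every subsequence admits a further subsequence converging in distribution to some non-negative random vector $\tilde{\mathbf Y}=(\tilde Y_1,\ldots,\tilde Y_s)$.

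I would then show that any such subsequential limit $\tilde{\mathbf Y}$ has the distribution of $\mathbf Y$. Because the Bernoulli variables in the thinnings are taken independent across coordinates and independent of $\mathbf X^{(n)}$, the joint probability generating function of the thinned vector factorizes conditionally on $\mathbf X^{(n)}$ and yields
\[
\mathbb E\Bigl[\prod_{r=1}^s u_r^{p_r^{(n)}\odot X_r^{(n)}}\Bigr]
= \mathbb E\Bigl[\prod_{r=1}^s \bigl(1-p_r^{(n)}(1-u_r)\bigr)^{X_r^{(n)}}\Bigr],\quad u_r\in[0,1].
\]
Along the chosen subsequence, invoke Skorokhod's representation theorem to realize $p_r^{(n)} X_r^{(n)}\to\tilde Y_r$ a.s. Expanding the logarithm,
\[
X_r^{(n)}\log\bigl(1-p_r^{(n)}(1-u_r)\bigr)
= -(1-u_r)\,p_r^{(n)} X_r^{(n)} + O\bigl(p_r^{(n)}\cdot p_r^{(n)} X_r^{(n)}\bigr)
\xrightarrow{\text{a.s.}} -(1-u_r)\tilde Y_r,
\]
so every factor converges a.s.\ to $\exp(-(1-u_r)\tilde Y_r)$. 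Since each factor is bounded by $1$, dominated convergence gives
\[
\mathbb E\Bigl[\prod_{r=1}^s u_r^{p_r^{(n)}\odot X_r^{(n)}}\Bigr]
\to \mathbb E\Bigl[\prod_{r=1}^s \mathrm e^{-(1-u_r)\tilde Y_r}\Bigr],
\]
which is exactly the PGF of a multivariate mixed Poisson vector with mixing distribution $\tilde{\mathbf Y}$. On the other hand, by \eqref{px_to_z} and \eqref{multiv_mix_Pois}, the same PGF converges to that of the multivariate mixed Poisson with mixing distribution $\mathbf Y$. Equating the two limits, the Laplace transforms of $\tilde{\mathbf Y}$ and $\mathbf Y$ agree on $[0,1]^s$; by analyticity of the Laplace transform of a non-negative random vector, they agree on all of $[0,\infty)^s$, forcing $\tilde{\mathbf Y}\overset d=\mathbf Y$.

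Combining these two ingredients, every subsequence of $\bigl(p_1^{(n)} X_1^{(n)},\ldots,p_s^{(n)} X_s^{(n)}\bigr)$ has a further subsequence converging in distribution to $\mathbf Y$, which together with tightness yields convergence of the whole sequence to $\mathbf Y$. I expect the main technical obstacle to lie in justifying the passage to the limit in the PGF: the second-order error in the expansion of $\log(1-p_r^{(n)}(1-u_r))$ must vanish, which requires $p_r^{(n)} X_r^{(n)}$ to stay tight --- precisely the content of condition \eqref{E_cond}, explaining why that hypothesis cannot be dropped.
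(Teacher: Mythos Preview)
Your proof is correct and takes a genuinely different route from the paper's. Both arguments hinge on the same probability–generating–function identity
\[
\mathbb E\Bigl[\prod_{r=1}^s u_r^{p_r^{(n)}\odot X_r^{(n)}}\Bigr]
=\mathbb E\Bigl[\prod_{r=1}^s \bigl(1-p_r^{(n)}(1-u_r)\bigr)^{X_r^{(n)}}\Bigr],
\]
but they exploit hypothesis~\eqref{E_cond} in different ways and finish differently. The paper works directly with transforms: it uses the multivariate mean value theorem, bounding $\partial\mathcal G_n/\partial u_r$ by $\mathbb E X_r^{(n)}$, to show that $\mathcal G_n\bigl(1-p_r^{(n)}t_r,\ldots\bigr)$ and $\mathcal G_n\bigl(\mathrm e^{-p_r^{(n)}t_r},\ldots\bigr)$ differ by $o(1)$; this turns the assumed PGF convergence into convergence of the Laplace transform of $\bigl(p_r^{(n)}X_r^{(n)}\bigr)$ on the cube $[0,1]^s$, and then invokes a generalized Curtiss theorem (Yakymiv) to conclude distributional convergence from Laplace-transform convergence on a bounded set. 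You instead extract tightness from \eqref{E_cond} via Markov's inequality, pass to a subsequential limit $\tilde{\mathbf Y}$ with Skorokhod coupling, identify its Laplace transform on $[0,1]^s$ with that of $\mathbf Y$ by the same PGF computation, and then use analyticity to pin down $\tilde{\mathbf Y}\overset d=\mathbf Y$. In effect you are proving the relevant special case of Yakymiv's continuity theorem inline rather than citing it. Your argument is therefore more self-contained, at the cost of the extra subsequence and Skorokhod machinery; the paper's is shorter but relies on an external, less standard reference.
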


\begin{proof}
Denote by $\mathcal G_n$ the probability generating function of $\mathbf X^{(n)}$:
\begin{equation}
\label{G_n}
\mathcal G_n(u_1,\ldots,u_s)=\mathbb E\Bigl(u_1^{X_1^{(n)}}\cdot\ldots\cdot u_s^{X_s^{(n)}}\Bigr),
\end{equation}
which is well defined at least for $(u_1,\ldots,u_s)\in[0,1]^s$. Then, by \eqref{thin}, the probability generating function of $\bigl(p_1^{(n)}\odot X_1^{(n)},\ldots,p_s^{(n)}\odot X_s^{(n)}\bigr)$ takes the form
\begin{equation}
\begin{aligned}
\label{pgf_px}
&\hspace{15.2 pt}\mathbb E\Bigl(u_1^{p_1^{(n)}\odot X_1^{(n)}}\cdot
\ldots\cdot u_s^{p_s^{(n)}\odot X_s^{(n)}}\Bigr)
=\mathbb E
\biggl(u_1^{\sum_{i=1}^{X_1^{(n)}}\varepsilon_{1,i}^{(n)}}\cdot
\ldots\cdot u_s^{\sum_{i=1}^ {X_s^{(n)}}\varepsilon_{s,i}^{(n)}}\biggr)
\\&=\quad\hspace{-7 pt}\mathbb E\sum_{k_1,\ldots,k_s=0}^\infty
u_1^{\sum_{i=1}^{k_1}\varepsilon_{1,i}^{(n)}}\cdot
\ldots\cdot u_s^{\sum_{i=1}^ {k_s}\varepsilon_{s,i}^{(n)}}\cdot
\mathds{1}\bigl\{X_1^{(n)}=k_1,\ldots,X_s^{(n)}=k_s\bigr\}
\\&=\hspace{-7 pt}\sum_{k_1,\ldots,k_s=0}^\infty\Bigl(\mathbb E u_1^{\varepsilon_{1,1}^{(n)}}\Bigr)^{k_1}\cdot\ldots
\cdot\Bigl(\mathbb E u_s^{\varepsilon_{s,1}^{(n)}}\Bigr)^{k_s}\cdot
\mathbb P\bigl\{X_1^{(n)}=k_1,\ldots,X_s^{(n)}=k_s\bigr\}
\\&=\quad\hspace{-7 pt}\mathbb E\Bigl(\bigl(p_1^{(n)}u_1+q_1^{(n)}\bigr)^{X_1^{(n)}}\cdot
\ldots\cdot\bigl(p_s^{(n)}u_s+q_s^{(n)}\bigr)^{X_s^{(n)}}\Bigr)
\\&=\quad\hspace{-7 pt}
\mathcal G_n\bigl(p_1^{(n)}u_1+q_1^{(n)},\ldots,p_s^{(n)}u_s+q_s^{(n)}\bigr).
\end{aligned}
\end{equation}
Here $\varepsilon_{r,i}^{(n)}$, $r=1,\ldots,s$, $i\in\mathbb N$, are independent $\mathsf{Bin}(1,p_r^{(n)})$, and $q_r^{(n)}=1-p_r^{(n)}$.
	
Next, denote by $\mathcal L$ the Laplace transform of $\mathbf Y=(Y_1,\ldots,Y_s)$:
\begin{equation}
\label{L_Y}
\mathcal L(t_1,\ldots,t_s)=\mathbb E\mathrm e^{-t_1Y_1-\ldots-t_sY_s},\quad t_1,\ldots,t_s\ge0.
\end{equation}
Then the probability generating function of $\mathbf Z=(Z_1,\ldots,Z_s)$ becomes
\begin{equation}
\label{pgf_z}
\begin{aligned}
\mathbb E\bigl(u_1^{Z_1}\cdot\ldots\cdot u_s^{Z_s}\bigr)&=
\mathbb E\,\mathbb E_{\mathbf Y}\bigl(u_1^{Z_1}\cdot\ldots\cdot u_s^{Z_s}\bigr)
=\mathbb E\Bigl(\bigl(\mathbb E_{Y_1}u_1^{Z_1}\bigr)\cdot\ldots\cdot
\bigl(\mathbb E_{Y_s}u_s^{Z_s}\bigr)\Bigr)
\\&=\mathbb E\Bigl(\mathrm e^{-Y_1(1-u_1)}\cdot\ldots\cdot
\mathrm e^{-Y_s(1-u_s)}\Bigr)=\mathcal L(1-u_1,\ldots,1-u_s),
\end{aligned}
\end{equation}
where $\mathbb E_{\mathbf Y},\mathbb E_{Y_1},\ldots,\mathbb E_{Y_s}$ stand for conditional means with respect to $\mathbf Y,Y_1,\ldots,Y_s$, respectively.
So, by \eqref{px_to_z}, \eqref{pgf_px}, and \eqref{pgf_z},
\[\lim_{n\to\infty}\mathcal G_n\bigl(p_1^{(n)}u_1+q_1^{(n)},\ldots,
p_s^{(n)}u_s+q_s^{(n)}\bigr)=\mathcal L(1-u_1,\ldots,1-u_s),
\quad u_1,\ldots,u_s\in[0,1],\]
and thus
\begin{equation}
\label{know}
\lim_{n\to\infty}\mathcal G_n\bigl(1-p_1^{(n)}t_1,\ldots,
1-p_s^{(n)}t_s\bigr)=\mathcal L(t_1,\ldots,t_s),
\quad t_1,\ldots,t_s\in[0,1].
\end{equation}
	
It follows from the multivariate mean value theorem that
\begin{equation}
\begin{aligned}
\label{G-G}
\Bigl|\mathcal G_n\bigl(1-p_1^{(n)}t_1,\ldots,1-&p_s^{(n)}t_s\bigr)-
\mathcal G_n\bigl(\mathrm e^{-p_1^{(n)}t_1},\ldots,\mathrm e^{-p_s^{(n)}t_s}\bigr)\Bigr|
\\&\le\sum_{r=1}^s\sup_{u_1,\ldots,u_s\in[0,1]}
\frac{\partial \mathcal G_n}{\partial u_r}(u_1,\ldots,u_s)\Bigl(\mathrm e^{-p_r^{(n)}t_r}-1+p_r^{(n)}t_r\Bigr).
\end{aligned}
\end{equation}
Since $\mathrm e^{-p_r^{(n)}t_r}-1+p_r^{(n)}t_r={\scriptstyle\mathcal O}\bigl(p_r^{(n)}\bigr)$ as $n\to\infty$, and
\[p_r^{(n)}\sup_{u_1,\ldots,u_s\in[0,1]}\frac{\partial \mathcal G_n}{\partial u_r}(u_1,\ldots,u_s)=
p_r^{(n)}\frac{\partial \mathcal G_n}{\partial u_r}(1,\ldots,1)=p_r^{(n)}\mathbb EX_r^{(n)}=\mathcal O(1)\]
by \eqref{E_cond}, then the right-hand side of \eqref{G-G} is ${\scriptstyle\mathcal O}(1)$ as $n\to\infty$. Thus, \eqref{know} implies
\[\lim_{n\to\infty}\mathcal G_n\bigl(\mathrm e^{-p_1^{(n)}t_1},\ldots,\mathrm e^{-p_s^{(n)}t_s}\bigr)=\mathcal L(t_1,\ldots,t_s),
\quad t_1,\ldots,t_s\in[0,1].\]
Due to \eqref{G_n} and \eqref{L_Y}, the left-hand and right-hand sides of the preceding relation are Laplace transforms of $\bigl(p_1^{(n)}X_1^{(n)},\ldots,p_s^{(n)}X_s^{(n)}\bigr)$ and $\bigl(Y_1,\ldots,Y_s\bigr)$, respectively. So, the claim follows from Theorem 2 in \cite{Yak}.
\end{proof}

\section{Proof of Theorem \ref{main_th}}
\label{proof_sec}

In this short section, we apply Theorems \ref{conv_th} and \ref{trick_th} to the proof of Theorem \ref{main_th}. Let
\begin{equation}
\label{TV}
\begin{gathered}
\tilde T_0^{(n)}=\inf\bigl\{x\in\mathbb R\colon
H^{(n)}\bigl(\{0\}\times(x,+\infty)\bigr)=0\bigr\},\\
V_r^{(n)}=H^{(n)}\bigl(\{r\}\times\bigl(\tilde T_0^{(n)},+\infty\bigr)\bigr)
\end{gathered}
\end{equation}
for $r\in\mathbb N$. In the notation of Section \ref{prel_sec}, $\tilde T_0^{(n)}=\max_{i\le n}\psi^{(n)}\bigl(Z_{i,0}^{(n)}\bigr)$, and thus means the (centered and normalized) time when the main collector completes his album in the poissonized scheme. Next, $V_r^{(n)}$ is the number of points of the thinned process $T_{\ln^{-r}n}\eta_r^{(n)}$ to the right of $\tilde T_0^{(n)}$. So, according to \eqref{UZ}, it is easily seen to be equal in distribution to $\ln^{-r}n\odot U_r^{(n)}$, where the operation $\odot$ is defined in \eqref{thin}.

By Theorem \ref{conv_th} and the Skorokhod coupling (see, e.g., \cite{Res07}, p.~41), we may assume that $H^{(n)}\xrightarrow{v}H$ a.s.\ as $n\to\infty$. Then, by Theorem 3.13 in \cite{Res87}, $V_r^{(n)}\to V_r$ a.s., where $V_r$ (and $\tilde T_0$) is defined similarly to \eqref{TV} but with $H$ instead of $H^{(n)}$. It is here that we use the fact that $H^{(n)}(\{r\}\times\cdot)$ are given on the semi-compactified space $\mathbb R\cup\{+\infty\}$, because we need $(x,+\infty)$ to be relatively compact.

Let $(N_r(t),t\ge0)$, $r\in\mathbb N_0$, be independent unit-rate Poisson counting processes. Denote by $E$ the first jump time of $N_0$, and note that
$E\sim\mathsf{Exp}(1)$. Due to Remarks \ref{ind_PPP} and \ref{nhg_PPP},
$\tilde T_0\overset d=-\ln E$, and $(V_1,\ldots,V_s)\overset d=(N_r(E/r!),
r=1,\ldots,s)$ for any $s\in\mathbb N$.
Summarizing all the above, we have
\[\bigl(\ln^{-r}n\odot U_r^{(n)},\,r=1,\ldots,s\bigr)\xrightarrow d (N_r(E/r!),\,r=1,\ldots,s),\qquad n\to\infty.\]

Then, applying Theorem \ref{trick_th} and taking into account Remark \ref{trick_rem}, we obtain
\[\bigl(\ln^{-r}n\cdot U_r^{(n)},\,r=1,\ldots,s\bigr)\xrightarrow d (E/r!,\,r=1,\ldots,s),\qquad n\to\infty,\]
condition \eqref{E_cond} being satisfied due to \eqref{EU} and \eqref{hn_as}. Thus, the convergence in \eqref{main_th_eq} holds in the sense of finite-dimensional distributions. To complete the proof, it only remains to note that in $\mathbb R^{\infty}$ the notions of finite-dimensional convergence and convergence in distribution are equivalent (see, e.g., \cite{Res07}, pp.~53--54).\qed

\section{Convergence of associated point processes II.
\\An \textit r-dependent centering}
For the proofs of Theorems \ref{second_th} and \ref{third_th}, we will also use a specially constructed sequence of point processes. This time, however, their convergence will be achieved not at the expense of thinning, as in Section \ref{sec_conv_I}, but due to an $r$-dependent centering.

On the metric space $(\mathbb X,d)$ given by \eqref{X} and \eqref{d}, consider the point processes $\hat H^{(n)}$, $n\in\mathbb N$, defined as follows: for $B_r\in\mathfrak B\bigl(\mathbb R\cup\{+\infty\}\bigr)$, $r\in\mathbb N_0$, let
\begin{equation}
\label{Hh}
\hat H^{(n)}\Bigl(\bigcup_{r=0}^\infty\bigl(\{r\}\times B_r\bigr)\Bigr)=\sum_{r=0}^\infty\hat\eta_r^{(n)}(B_r),
\end{equation}
where
\[\hat\eta_r^{(n)}=\sum_{i=1}^n\delta_{\hat\psi_r^{(n)}\bigl(Z_{i,r}^{(n)}\bigr)},\]
and the centering/normalizing functions $\hat\psi_r^{(n)}$, slightly generalizing $\psi_r^{(n)}$ in \eqref{psi_r}, are defined as follows:
\begin{equation}
\hat\psi_r^{(n)}(x)=\frac xn-\ln n-r\ln\ln n-\delta_r^{(n)},
\quad x\in\mathbb R.\label{hpsi_r}
\end{equation}
Here, for each $r\in\mathbb N_0$, $\delta_r^{(n)}$ is a fixed numerical sequence such that $\lim_{n\to\infty}\delta_r^{(n)}=0$. We will also use counterparts of $\hat H^{(n)}$ in the original, non-poissonized scheme:
\begin{equation*}
\hat\Xi^{(n)}\Bigl(\bigcup_{r=0}^\infty\bigl(\{r\}\times B_r\bigr)\Bigr)=\sum_{r=0}^\infty\hat\xi_r^{(n)}(B_r),
\end{equation*}
where
\[\hat\xi_r^{(n)}=\sum_{i=1}^n\delta_{\hat\psi_r^{(n)}\bigl(Y_{i,r}^{(n)}\bigr)},\]
and $Y_{i,r}^{(n)}$ are defined at the beginning of Section \ref{prel_sec}.

The following result is an analogue of Theorem \ref{conv_th} but differs in two significant aspects. Firstly, we now use the $r$-dependent centering \eqref{hpsi_r} instead of thinning, and secondly, the original scheme instead of the poissonized one. The latter will require a special depoissonization procedure based on the coupling formula \eqref{ZY} and similar to that used in \cite{Il19}. Unlike Theorem \ref{conv_th}, we need to consider this result in the original setting rather than in the poissonized one due to the lack of any counterpart to \eqref{UZ} for~$\hat U_r^{(n)}$.

\begin{theorem}
\label{conv_th_2}
Let $H$ be defined as in Theorem \ref{conv_th}.
Then $\hat\Xi^{(n)}\xrightarrow{vd}H$ as $n\to\infty$.
\end{theorem}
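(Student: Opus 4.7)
Following the strategy outlined at the start of this section, I would split the proof into two steps: first establish the poissonized analogue $\hat H^{(n)} \xrightarrow{vd} H$, then transfer the convergence to $\hat\Xi^{(n)}$ via the coupling \eqref{ZY}.

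\textbf{Step 1: poissonized version.} I would repeat the argument of Theorem \ref{conv_th}, now with $r$-dependent centering in place of thinning. Since $H$ is simple and its intensity is diffuse, Theorem 4.18 in \cite{Kal} reduces the task to verifying \eqref{cond_a} and \eqref{cond_b} with $\hat H^{(n)}$ in place of $H^{(n)}$. The gamma density of $Z_{i,r}^{(n)}$ gives
\begin{equation*}
\hat f_r^{(n)}(x) = \frac{(x + \ln n + r\ln\ln n + \delta_r^{(n)})^r\, \mathrm e^{-(x + \ln n + r\ln\ln n + \delta_r^{(n)})}}{r!} \cdot \mathds 1\bigl\{x \ge -\ln n - r\ln\ln n - \delta_r^{(n)}\bigr\},
\end{equation*}
and, since $\delta_r^{(n)} \to 0$, dominated convergence as in \eqref{1-lim} yields $n \int_{B_r} \hat f_r^{(n)}(x)\,\mathrm dx \to \frac{1}{r!}\int_{B_r} \mathrm e^{-x}\,\mathrm dx$. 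This establishes \eqref{cond_b} and the first-order inclusion-exclusion term for \eqref{cond_a}. For the cross terms, the joint density of $\bigl(\hat\psi_{r_1}^{(n)}(Z_{i,r_1}^{(n)}), \hat\psi_{r_2}^{(n)}(Z_{i,r_2}^{(n)})\bigr)$, derived as in \eqref{f_2}, contributes a factor of order $(\ln\ln n)^{r_2-r_1-1}/\ln^{r_2-r_1} n$ in $n\hat P_{r_1, r_2}^{(n)}$, so these vanish and \eqref{cond_a} follows exactly as in the proof of Theorem~\ref{conv_th}.

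\textbf{Step 2: depoissonization.} Put $S_k := \sum_{j=1}^k (E_j - 1)$; the coupling \eqref{ZY} rewrites as $Z_{i,r}^{(n)} - Y_{i,r}^{(n)} = S_{Y_{i,r}^{(n)}}$, with $(S_k)$ independent of the family $\bigl(Y_{i,r}^{(n)}\bigr)$. Since $Y_{i,r}^{(n)} \le T_s^{(n)}$ and \eqref{ER} yields $T_s^{(n)} = O_p(n\ln n)$, Kolmogorov's maximal inequality gives $\max_{k\le T_s^{(n)}} |S_k| = O_p(\sqrt{n\ln n})$, so
\begin{equation*}
\sup_{r\le s,\, i\le n}\bigl|\hat\psi_r^{(n)}(Z_{i,r}^{(n)}) - \hat\psi_r^{(n)}(Y_{i,r}^{(n)})\bigr| = \frac{1}{n}\max_{k\le T_s^{(n)}}|S_k| = O_p\bigl(\sqrt{\ln n/n}\bigr) = o_p(1).
\end{equation*}
For any continuous compactly supported $f$ on $(\mathbb X, d)$, the atoms of $\hat\Xi^{(n)}$ and $\hat H^{(n)}$ are paired index-by-index and, on the event of uniform closeness, differ in location by $o_p(1)$. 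Combined with uniform continuity of $f$ and with the tightness of the number of atoms in a fixed thickening of $\mathrm{supp}\,f$ (from Step 1), this yields $\int f\,\mathrm d\hat\Xi^{(n)} - \int f\,\mathrm d\hat H^{(n)} \xrightarrow{p} 0$. Slutsky combined with Step 1 then gives the claimed $\hat\Xi^{(n)} \xrightarrow{vd} H$.

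\textbf{Main obstacle.} The delicate point is the depoissonization: we need \emph{uniform} control $\sup_{i\le n}\bigl|Z_{i,r}^{(n)}-Y_{i,r}^{(n)}\bigr| = o_p(n)$, not merely a bound for fixed $i$. The key simplification is that, under the coupling, all of these differences are evaluations of the \emph{same} centered random walk $(S_k)$ at random times bounded by $T_s^{(n)}$, reducing the uniform bound to a standard one-dimensional maximal inequality. This explains why the argument must be carried out in the original (not poissonized) scheme, as remarked before the theorem.
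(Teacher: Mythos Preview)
Your Step~1 is the paper's own argument: verify Kallenberg's criteria \eqref{cond_a}--\eqref{cond_b} for $\hat H^{(n)}$ via the shifted gamma densities and dominated convergence. The paper flags the cross-term estimate as ``a bit more technical'' than in Theorem~\ref{conv_th} and performs an explicit substitution $z=y-x+(r_2-r_1)\ln\ln n+\cdots$ to handle the case where $B_{r_1},B_{r_2}$ are unbounded above, but your order-of-magnitude heuristic is correct and the missing details are routine. Your depoissonization in Step~2, on the other hand, is a genuinely different route. The paper imports Lemma~3.4 of \cite{Il19} to bound $\mathbb P\bigl\{\hat\xi_r^{(n)}([a,b])\ne\hat\eta_r^{(n)}([a,b])\bigr\}$ on individual segments, then upgrades these one-set bounds to $\hat\Xi^{(n)}\xrightarrow{vd}H$ via Theorem~4.15 of \cite{Kal} on a dissecting ring. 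You instead exploit the coupling \eqref{ZY} more directly: all discrepancies $Z_{i,r}^{(n)}-Y_{i,r}^{(n)}$ are evaluations of the \emph{same} centered random walk $(S_k)$, independent of the $Y$'s, at times bounded by $T_s^{(n)}=O_p(n\ln n)$; Kolmogorov's maximal inequality then yields a uniform $o_p(1)$ shift of all atoms on levels $\le s$, and convergence of Laplace functionals follows from Slutsky together with the tightness of $\hat H^{(n)}$ on thickenings of $\supp f$. Your approach is self-contained and arguably cleaner, extracting the full strength of the single-walk structure in \eqref{ZY}; the paper's approach is more modular and stays closer to standard point-process machinery, at the cost of citing an external technical lemma.
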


\begin{remark}
This theorem may be regarded as an infinite-dimensional extension of Theorem 3.1 in \cite{Il19}.
\end{remark}

\begin{proof}[Proof of Theorem \ref{conv_th_2}]
To begin with, we will return for a while to the poissonized scheme and prove that $\hat H^{(n)}\xrightarrow{vd}H$ as $n\to\infty$. Similarly to \eqref{cond_a} and \eqref{cond_b}, it is enough only to show that
\begin{gather}
\lim_{n\to\infty}\mathbb P\{\hat H^{(n)}(U)=0\}=\mathbb P\{H(U)=0\},
\label{cond_a2}\\
\lim_{n\to\infty}\mathbb E\hat H^{(n)}(U)=\mathbb EH(U),
\label{cond_b2}
\end{gather}
for each $U\in\mathcal U$, where the ring $\mathcal U$ is defined in \eqref{bound_set}. Moreover, as before we may assume that $U\subset\mathbb N_0\times\mathbb R$. Then, by \eqref{Hh}
\begin{equation}
\begin{aligned}
\label{phh}
\mathbb P\{\hat H^{(n)}(U)=0\}&=\mathbb P\Bigl\{\hat H^{(n)}\Bigl(\bigcup_{r=0}^s\bigl(\{r\}\times B_r\bigr)\Bigr)=0\Bigr\}\\&=\mathbb P\Bigl\{\sum_{r=0}^s \hat\eta_r^{(n)}(B_r)=0\Bigr\}=
\mathbb P\bigl\{\hat\eta_r^{(n)}(B_r)=0\text{ for }r=0,\ldots,s\bigr\}.
\end{aligned}
\end{equation}
Hence, using the i.i.d.~property of $Z_{i,r}^{(n)}$ and inclusion-exclusion, we have
\begin{equation*}
\mathbb P\{\hat H^{(n)}(U)=0\}=\Bigl(1-\sum_{0\le r_1\le s}
\hat P_{r_1}^{(n)}+\sum_{0\le r_1<r_2\le s}\hat P_{r_1,r_2}^{(n)}-
\ldots+(-1)^{s+1}\hat P_{0,1,\ldots,s}^{(n)}\Bigr)^n,
\end{equation*}
where the probabilities $\hat P_{r_1,\ldots,r_m}^{(n)}$ are defined similarly to $P_{r_1,\ldots,r_m}^{(n)}$ in \eqref{P}, but with $\hat\psi_r^{(n)}$ instead of $\psi^{(n)}$. So,
\begin{multline*}
\lim_{n\to\infty}\ln\mathbb P\{\hat H^{(n)}(U)=0\}\\=-\sum_{0\le r_1\le s}\lim_{n\to\infty}n\hat P_{r_1}^{(n)}+
\sum_{0\le r_1<r_2\le s}\lim_{n\to\infty}n\hat P_{r_1,r_2}^{(n)}-\ldots+(-1)^{s+1}\lim_{n\to\infty}
n\hat P_{0,1,\ldots,s}^{(n)}.
\end{multline*}
As before, in order to prove \eqref{cond_a2}, it suffices to show that the limits in the first sum equal
$\frac 1{r_1!}\int_{B_{r_1}}\mathrm e^{-x}\,\mathrm dx$,
and those in the second sum vanish.

Using \eqref{f_1} and \eqref{f_2}, we may easily calculate the densities $\hat f_{r_1}^{(n)}$ and $\hat f_{r_1,r_2}^{(n)}$ of $\hat\psi_{r_1}^{(n)}\bigl(Z_{i,r_1}^{(n)}\bigr)$ and $\bigl(\hat\psi_{r_1}^{(n)}\bigl(Z_{i,r_1}^{(n)}\bigr),
\hat\psi_{r_2}^{(n)}\bigl(Z_{i,r_2}^{(n)}\bigr)\bigr)$, respectively:
\begin{gather*}
\hat f_{r_1}^{(n)}(x)=\frac{\bigl(\frac x{\ln n}+\frac{r_1\ln\ln n}{\ln n}+\frac{\delta_{r_1}^{(n)}}{\ln n}+1\bigr)^{r_1}\mathrm e^{-x-\delta_{r_1}^{(n)}}}{n\,r_1!}\cdot\mathds 1\bigl\{-\ln n-r_1\ln\ln n-\delta_{r_1}^{(n)}\le x\bigr\},\quad x\in\mathbb R,\\
\begin{aligned}
\hat f_{r_1,r_2}^{(n)}&(x,y)=\frac{\bigl(\frac x{\ln n}+\frac{r_1\ln\ln n}{\ln n}+\frac{\delta_{r_1}^{(n)}}{\ln n}+1\bigr)^{r_1}\mathrm e^{-y-\delta_{r_2}^{(n)}}}{n\,r_1!\,(r_2-r_1-1)!}\\&\times\bigl(y-x+(r_2-r_1)\ln\ln n+\bigl(\delta_{r_2}^{(n)}-\delta_{r_1}^{(n)}\bigr)\bigr)^{r_2-r_1-1}
(\ln n)^{-(r_2-r_1)}\\&\times\mathds 1\bigl\{-\ln n-r_2\ln\ln n-\delta_{r_2}^{(n)}\le x-(r_2-r_1)\ln\ln n-\bigl(\delta_{r_2}^{(n)}-\delta_{r_1}^{(n)}\bigr)\le y\bigr\},
\quad x,y\in\mathbb R.
\end{aligned}
\end{gather*}
Hence,
\begin{equation}
\begin{aligned}
\label{2-lim}
n\hat P_{r_1}^{(n)}&=n\int_{B_{r_1}}\hat f_{r_1}^{(n)}(x)\,\mathrm dx=
\frac 1{r_1!}\int_{B_{r_1}}\Bigl(\frac x{\ln n}+\frac{r_1\ln\ln n}{\ln n}+\frac{\delta_{r_1}^{(n)}}{\ln n}+1\Bigr)^{r_1}\mathrm e^{-x-\delta_{r_1}^{(n)}}\\&\times\mathds 1\{-\ln n-r_1\ln\ln n-\delta_{r_1}^{(n)}\le x\}\,\mathrm dx\to\frac 1{r_1!}\int_{B_{r_1}}
\mathrm e^{-x}\,\mathrm dx\quad\text{as $n\to\infty$}
\end{aligned}
\end{equation}
by dominated convergence since $B_{r_1}$ is bounded from below.

The proof that $\lim_{n\to\infty}n\hat P_{r_1,r_2}^{(n)}=0$ is a bit more technical than a similar piece in the proof of Theorem \ref{conv_th}. Letting $\alpha=\inf B_{r_1}$, $\beta=\inf B_{r_2}$, and noting that $\alpha,\beta>-\infty$, we have
\begin{align*}
n\hat P_{r_1,r_2}^{(n)}&\le n\int_\alpha^{+\infty}\int_\beta^{+\infty}
\hat f_{r_1,r_2}^{(n)}(x,y)\,\mathrm dx\,\mathrm dy\\&=
\frac{(\ln n)^{-(r_2-r_1)}}{r_1!\,(r_2-r_1-1)!}\int_\alpha^{+\infty}\int_\beta^{+\infty}\Bigl(\frac x{\ln n}+\frac{r_1\ln\ln n}{\ln n}+\frac{\delta_{r_1}^{(n)}}{\ln n}+1\Bigr)^{r_1}\\&\times\bigl(y-x+(r_2-r_1)\ln\ln n+\bigl(\delta_{r_2}^{(n)}-\delta_{r_1}^{(n)}\bigr)\bigr)^{r_2-r_1-1}\mathrm e^{-y-\delta_{r_2}^{(n)}}\\&\times\mathds 1\bigl\{-\ln n-r_2\ln\ln n-\delta_{r_2}^{(n)}\le x-(r_2-r_1)\ln\ln n-\bigl(\delta_{r_2}^{(n)}-\delta_{r_1}^{(n)}\bigr)\le y\bigr\}\,\mathrm dx\,\mathrm dy.
\end{align*}
Setting $z=y-x+(r_2-r_1)\ln\ln n+\bigl(\delta_{r_2}^{(n)}-\delta_{r_1}^{(n)}\bigr)$, we then obtain
\begin{align*}
n\hat P_{r_1,r_2}^{(n)}&\le\frac 1{r_1!\,(r_2-r_1-1)!}\int_\alpha^{+\infty}\int_0^{+\infty}
\Bigl|\frac x{\ln n}+\frac{r_1\ln\ln n}{\ln n}+\frac{\delta_{r_1}^{(n)}}{\ln n}+1\Bigr|^{r_1}
\\&\times z^{r_2-r_1-1}\mathrm e^{-(x+z+\delta_{r_1}^{(n)})}\cdot\mathds 1\bigl\{x+z\ge\beta+(r_2-r_1)\ln\ln n+\bigl(\delta_{r_2}^{(n)}-\delta_{r_1}^{(n)}\bigr)\bigr\}\,\mathrm dx\,\mathrm dz.
\end{align*}
Again by dominated convergence, the right-hand side vanishes as $n\to\infty$. This completes the proof of \eqref{cond_a2}.

The proof of \eqref{cond_b2} is similar to that of \eqref{cond_b}. Analogously to \eqref{phh},
\begin{equation*}
\mathbb E\hat H^{(n)}(U)=\mathbb E\hat H^{(n)}\Bigl(\bigcup_{r=0}^s\bigl(\{r\}\times B_r\bigr)\Bigr)=\sum_{r=0}^s\mathbb E\hat\eta_r^{(n)}(B_r).
\end{equation*}
Since $\hat\eta_r^{(n)}(B_r)\sim\mathsf{Bin}\bigl(n,\hat P_r^{(n)}\bigr)$, \eqref{2-lim} yields
\[\mathbb E\hat H^{(n)}(U)=\sum_{r=0}^sn\hat P_r^{(n)}\xrightarrow[n\to\infty]{}\sum_{r=0}^s\frac 1{r!}\int_{B_r}
\mathrm e^{-x}\,\mathrm dx=\mathbb EH(U).\]
This proves \eqref{cond_b2}, which, along with \eqref{cond_a2}, delivers $\hat H^{(n)}\xrightarrow[n\to\infty]{vd}H$.

Our next goal is to carry out a depoissonization procedure which enables to turn $\hat H^{(n)}\xrightarrow{vd}H$ into $\hat\Xi^{(n)}\xrightarrow{vd}H$.
Let $\mathcal C$ stand for the ring of finite unions of disjoint closed segments on different levels of $\mathbb X$, which are bounded with respect to $d$ in \eqref{d}:
\begin{equation*}
\mathcal C=\Bigl\{\bigcup_{r=0}^s\bigcup_{k=1}^{l_r}\bigl(\{r\}
\times[a_{r,k},b_{r,k}]\bigr)\colon s,l_0,\ldots,l_s\in\mathbb N_0\Bigr\}.
\end{equation*}
Here $a_{r,k},b_{r,k}\in\mathbb R\cup\{+\infty\}$, and we will always use the convention that $+\infty\pm\varepsilon=+\infty$. In the terminology of \cite{Kal}, $\mathcal C$ is a dissecting ring in $(\mathbb X,d)$.

Fix any $C\in\mathcal C$. By Lemma 3.4 in \cite{Il19}, which was proved on the basis of the coupling formula \eqref{ZY}, for any $n\in\mathbb N$, $\varepsilon>0$, and all $r$, $k$ we have
\begin{multline*}
\mathbb P\bigl\{\hat\xi_r^{(n)}\bigl([a_{r,k},b_{r,k}]\bigr)\ne
\hat\eta_r^{(n)}\bigl([a_{r,k},b_{r,k}]\bigr)\bigr\}\\\le c_r\varepsilon^{-4}n^{-1}+\mathbb P\bigl\{\hat\eta_r^{(n)}\bigl([a_{r,k}-\varepsilon,a_{r,k}+\varepsilon]
\bigr)\ge1\bigr\}+\mathbb P\bigl\{\hat\eta_r^{(n)}\bigl([b_{r,k}-\varepsilon,b_{r,k}+\varepsilon]
\bigr)\ge1\bigr\}
\end{multline*}
with some $c_r>0$. (In fact, this lemma was proved for the centering/normalizing function $\psi_r^{(n)}$, not $\hat\psi_r^{(n)}$, and for $a_{r,k},b_{r,k}\in\mathbb R$, not $\mathbb R\cup\{+\infty\}$, but its proof remains valid also in our case.) Taking $\varepsilon=n^{-\frac 15}$, we then obtain:
\begin{equation}
\begin{aligned}
\label{Pne}
&\mathbb P\bigl\{\hat\xi_r^{(n)}\bigl([a_{r,k},b_{r,k}]\bigr)\ne
\hat\eta_r^{(n)}\bigl([a_{r,k},b_{r,k}]\bigr)\bigr\}\le c_rn^{-\frac 15}\\
+\,&\mathbb P\bigl\{\hat\eta_r^{(n)}\bigl([a_{r,k}-n^{-\frac 15},a_{r,k}+n^{-\frac 15}]
\bigr)\ge1\bigr\}+\mathbb P\bigl\{\hat\eta_r^{(n)}\bigl([b_{r,k}-n^{-\frac 15},b_{r,k}+n^{-\frac 15}]
\bigr)\ge1\bigr\}.
\end{aligned}
\end{equation}

Let $\eta_r$, $r\in\mathbb N_0$, stand for the Poisson point process on $\mathbb R\cup\{+\infty\}$ with intensity measure $\lambda_r$ given by
\begin{equation*}
\lambda_r(B)=\frac 1{r!}\int_B\mathrm e^{-x}\,\mathrm dx,\quad B\in\mathfrak B\bigl(\mathbb R\cup\{+\infty\}\bigr).
\end{equation*}
Actually, $\eta_r$ are the single-level point processes which make up the limiting multilevel process $H$.
Since $\hat H^{(n)}\xrightarrow[n\to\infty]{vd}H$ implies $\hat \eta_r^{(n)}\xrightarrow[n\to\infty]{vd}\eta_r$, we get
\begin{equation*}
\mathbb P\bigl\{\hat\eta_r^{(n)}\bigl([a_{r,k}-n^{-\frac 15},a_{r,k}+n^{-\frac 15}]
\bigr)\ge1\bigr\}\xrightarrow[n\to\infty]{}\mathbb P\bigl\{\eta_r\bigl(\{a_{r,k}\}\bigr)\ge1\bigr\}=0,
\end{equation*}
and the same holds for $b_{r,k}$.
Here the simultaneous passage to the limit in the measure and its argument is justified by Lemma 3.5 in \cite{Il19}. Hence, by \eqref{Pne}
\[\mathbb P\bigl\{\hat\xi_r^{(n)}\bigl([a_{r,k},b_{r,k}]\bigr)\ne
\hat\eta_r^{(n)}\bigl([a_{r,k},b_{r,k}]\bigr)\bigr\}\to0,\qquad n\to\infty.\]
So, by the coupling inequality we have for any $m\in\mathbb N_0$
\begin{multline*}
\bigl|\mathbb P\bigl\{\hat\Xi^{(n)}(C)=m\bigr\}-\mathbb P\bigl\{\hat H^{(n)}(C)=m\bigr\}\bigr|\le\mathbb P\bigl\{\hat\Xi^{(n)}(C)\ne\hat H^{(n)}(C)\bigr\}\\
\le\sum_{r=0}^s\sum_{k=1}^{l_r}\mathbb P\bigl\{\hat\xi_r^{(n)}\bigl([a_{r,k},b_{r,k}]\bigr)\ne
\hat\eta_r^{(n)}\bigl([a_{r,k},b_{r,k}]\bigr)\bigr\}\to0,\qquad n\to\infty.
\end{multline*}
Thus, it follows from $\hat H^{(n)}\xrightarrow[n\to\infty]{vd}H$ that
\[\lim_{n\to\infty}\mathbb P\bigl\{\hat\Xi^{(n)}(C)=m\bigr\}=
\lim_{n\to\infty}\mathbb P\bigl\{\hat H^{(n)}(C)=m\bigr\}=
\mathbb P\bigl\{H(C)=m\bigr\}\]
for any $C\in\mathcal C$ and $m\in\mathbb N_0$. Since $\mathcal C$ is a dissecting ring and the limiting point process $H$ is simple, Theorem 4.15 in \cite{Kal} yields $\hat\Xi^{(n)}\xrightarrow[n\to\infty]{vd}H$. This completes the proof.
\end{proof}

\section{Proof of Theorems \ref{second_th} and \ref{third_th}}

In this section, we apply Theorem \ref{conv_th_2} to the proof of Theorems \ref{second_th} and \ref{third_th}. As in Section \ref{proof_sec}, it suffices only to prove the convergence of finite-dimensional distributions. 

Similarly to \eqref{TV}, let
\begin{gather}
\label{hTr}
\hat T_r^{(n)}=\inf\bigl\{x\in\mathbb R\colon
\hat\Xi^{(n)}\bigl(\{r\}\times(x,+\infty)\bigr)=0\bigr\},\\
\nonumber
\hat V_r^{(n)}=\hat\Xi^{(n)}\bigl(\{r\}\times\bigl(\hat T_0^{(n)},+\infty\bigr)\bigr)
\end{gather}
for $r\in\mathbb N_0$. Particularly, $\hat V_0^{(n)}=0$ a.s. By definition of $\hat\Xi^{(n)}$ and $\hat\xi_r^{(n)}$, and according to \eqref{T}, it holds that
\begin{equation}
\label{hT}
\hat T_r^{(n)}=\max_{i\le n}\hat\psi_r^{(n)}\bigl(Y_{i,r}^{(n)}\bigr)=\hat\psi_r^{(n)}
\bigl(T_r^{(n)}\bigr).
\end{equation}
Thus, $\hat T_r^{(n)}$ means the (centered and normalized) time when the $r^{th}$ collector completes his album in the original, non-poissonized scheme. Next, $\hat V_r^{(n)}$ is the number of points of the single-level process $\hat\xi_r^{(n)}$ to the right of $\hat T_0^{(n)}$. By definition of $\hat\xi_r^{(n)}$ and according to \eqref{hpsi_r}, \eqref{hT}, we have
\begin{equation*}
\hat V_r^{(n)}=\card\bigl\{i\colon Y_{i,r}^{(n)}>T_0^{(n)}+rn\ln\ln n+n\bigl(\delta_r^{(n)}-\delta_0^{(n)}\bigr)\bigr\}.
\end{equation*}
Comparing this with \eqref{hU}, we can see that, with any choice of ${\scriptstyle\mathcal{O}}(n)$ in \eqref{d_r}, one may choose $\delta_r^{(n)}$, $r\in\mathbb N_0$, in such a way that
$\hat V_r^{(n)}=\hat U_r^{(n)}$.

Just as it was done in Section \ref{proof_sec}, Theorem \ref{conv_th_2} and the Skorokhod coupling imply that, for any $s\in\mathbb N$,
\begin{equation*}
\bigl(\hat U_r^{(n)},\,r=1,\ldots,s\bigr)\xrightarrow d (N_r(E/r!),\,r=1,\ldots,s),\qquad n\to\infty,
\end{equation*}
where $E\sim\mathsf{Exp}(1)$, and $N_r$ are unit-rate Poisson counting processes, independent of each other and of $E$. This completes the proof of \eqref{second_th_eq}.
The generating function of the random vector on the right-hand side can be easily calculated by means of conditioning with respect to $E$:
\begin{multline*}
\mathbb E\Bigl(\prod_{r=1}^su_r^{N_r(E/r!)}\Bigr)=\mathbb E\Bigl(\prod_{r=1}^s\mathbb E_E\bigl(u_r^{N_r(E/r!)}\bigr)\Bigr)=
\mathbb E\Bigl(\prod_{r=1}^s\mathrm e^{-E(1-u_r)/r!}\Bigr)\\=
\int_0^{+\infty}\mathrm e^{-x-x\sum_{r=1}^s\frac{1-u_r}{r!}}\,\mathrm dx=
\Bigl(1+\sum_{r=1}^s\frac{1-u_r}{r!}\Bigr)^{-1},\qquad (u_1,\ldots,u_s)\in[0,1]^s.
\end{multline*}
Letting here $s\to\infty$ yields \eqref{Gen_G}.

Finally, again Theorem \ref{conv_th_2} and the Skorokhod coupling, combined with \eqref{hT}, \eqref{hTr}, and Remarks \ref{ind_PPP}, \ref{nhg_PPP} imply that, for any $s\in\mathbb N_0$,
\begin{multline*}
\bigl(\hat\psi_r^{(n)}\bigl(T_r^{(n)}\bigr),r=0,\ldots,s\bigr)=\bigl(\hat T_r^{(n)},r=0,\ldots,s\bigr)\\\xrightarrow d
\bigl(-\ln r!-\ln E_r,r=0,\ldots,s\bigr),\qquad n\to\infty,
\end{multline*}
where $E_r$ are independent $\mathsf{Exp}(1)$-distributed random variables.
Here $E_r$ actually means the first jump time of $N_r$. Thus, $-\ln r!-\ln E_r$ on the right-hand side, by Remark \ref{nhg_PPP}, is the rightmost point of $H\bigl(\{r\}\times\cdot\bigr)$, the $r^{th}$ level of the limiting process $H$. To complete the proof of \eqref{third_th_eq}, it suffices to note that $-\ln r!-\ln E_r\overset d=B_r$ from \eqref{Br}, and to put in \eqref{hpsi_r} $\delta_r^{(n)}=0$.\qed

\section{Concluding remarks and open problems}

In Theorems \ref{main_th} and \ref{second_th}, the reference point is $T_0^{(n)}$, the time when the main collector completed his album. That is, we studied the number of empty spots in the album of the $r^{th}$ brother at time $T_0^{(n)}$ in Theorem \ref{main_th} and at time $T_0^{(n)}+d_r^{(n)}$ in Theorem \ref{second_th}. Similarly, we could fix some $r_0\in\mathbb N_0$ and consider the number of empty spots in the collection of the $r^{th}$ brother at times $T_{r_0}^{(n)}$ and $T_{r_0}^{(n)}+d_{r,r_0}^{(n)}$ with some $d_{r,r_0}^{(n)}\to+\infty$ as $n\to\infty$. The former makes sense only for $r>r_0$, while, in the latter case, we may consider $r<r_0$ with $d_{r,r_0}^{(n)}<0$, replacing $d_{r,r_0}^{(n)}\to+\infty$ by $d_{r,r_0}^{(n)}\to-\infty$. Repeating the above proofs with minimal changes, we can derive analogous results with similar limiting distributions (exponential and geometric, respectively).

Another possible reference point is $T_0^{(n)}(m)$, $m\in\mathbb N_0$, the time when the main collector first assembled some $n-m$ (unspecified) of $n$ coupons, or $T_{r_0}^{(n)}(m)$, the similar value for the $r_0^{\,\,th}$ collector. In this notation, $T_{r_0}^{(n)}(0)=T_{r_0}^{(n)}$.
In this case, we come to more general limiting distributions --- gamma and negative binomial, respectively. This can be proved along the above lines.

Let us now dwell on some possible directions for further research. Computer simula\-tion shows that the rate of convergence in Theorems \ref{main_th}, \ref{second_th}, and \ref{third_th} is rather slow. It would be interesting to obtain some upper bounds for these rates in terms of Wasserstein (for Theorems \ref{main_th} and \ref{third_th}) and total variation (for Theorem \ref{second_th}) distances. The most natural tool for this is, of course, the Stein method. Some of its applications to the coupon collector's problem are given in Chapter 6 of \cite{BHJ} and in \cite{Ross11}.

Also of interest is the following problem. Let $W_0^{(n)}$ denote the number of the youngest brother who managed to start his collection while the main collector was assembling his own. Thus,
\begin{equation*}
W_0^{(n)}=\max\bigl\{r\in\mathbb N_0\colon\min_{i\le n}Z_{i,r}^{(n)}
<\max_{i\le n}Z_{i,0}^{(n)}\bigr\}.
\end{equation*}
What is the limiting distribution for $W_0^{(n)}$? And what normalization is needed for this? Clearly, we may ask the same questions about $W_{r_0}^{(n)}$,
which is defined similarly but with $\max_{i\le n}Z_{i,r_0}^{(n)}$ instead of $\max_{i\le n}Z_{i,0}^{(n)}$. Most likely, these results could be obtained by means of extreme value theory.

Finally, a wide scope for research is provided by the extended coupon collector's problem with unequal probabilities. Some steps in this direction have been recently taken in \cite{DP18}.

\end{document}